\numberwithin{equation}{section}
\definecolor{plum}{rgb}{.5,0,1}
\newtheorem{theorem}{Theorem}
\newtheorem{proposition}{Proposition}
\newtheorem{corollary}{Corollary}
\theoremstyle{plain}
\newtheorem{lemma}{Lemma}
\theoremstyle{definition}
\newtheorem{definition}{Definition}
\newtheorem{remark}{Remark}
\newcommand{\Z}{\mathbb{Z}}
\newcommand{\R}{\mathbb{R}}
\newcommand{\C}{\mathbb{C}}
\newcommand{\e}{\epsilon}
\newcommand{\supp}{\mathrm{supp}\, }
\DeclareFontFamily{U}{mathx}{\hyphenchar\font45}
\DeclareFontShape{U}{mathx}{m}{n}{
      <5> <6> <7> <8> <9> <10>
      <10.95> <12> <14.4> <17.28> <20.74> <24.88>
      mathx10
      }{}
\DeclareSymbolFont{mathx}{U}{mathx}{m}{n}
\DeclareMathAccent{\widecheck}{0}{mathx}{"71}
\DeclareMathAccent{\wideparen}{0}{mathx}{"75}
\title{Null Structures and Degenerate Dispersion Relations in Two
  Space Dimensions} \date{} \author{Yuqiu Fu} \author{Daniel Tataru}
\thanks{The second author was partially supported by the NSF grant
  DMS-1266182 as well as by a Simons Investigator grant from the
  Simons Foundation.}  \address{Department of Mathematics, 970 Evans
  Hall, Berkeley, CA 94720}
\email{\href{mailto:yuqiufu@outlook.com}{yuqiufu@outlook.com} \\
  \href{mailto:tataru@math.berkeley.edu}{tataru@math.berkeley.edu}}
\begin{document}

\begin{abstract}
  For a dispersive PDE, the degeneracy of its dispersion relation will
  deteriorate dispersion of waves, and strengthen nonlinear
  effects. Such negative effects can sometimes be mitigated by some
  null structure in the nonlinearity.

  Motivated by water-wave problems, in this paper we consider a class
  of nonlinear dispersive PDEs in 2D with cubic nonlinearities, whose
  dispersion relations are radial and have vanishing Guassian
  curvature on a circle.  For such a model we identify certain null
  structures for the cubic nonlinearity, which suffice in order to
  guarantee global scattering solutions for the small data problem.
  Our null structures in the power-type nonlinearity are weak, and
  only eliminate the worst nonlinear interaction. Such null structures
  arise naturally in some water-wave problems.
\end{abstract}

\maketitle

\section{Introduction}\label{introsection}

We consider the following model Cauchy problem
\begin{equation}\label{modeleqn}
  \left\{
    \begin{array}{l}
      \partial_t u-ih(D)u=A(D)(|P_{\leq M}u|^2P_{\leq M}u)\\
      \\
      u(0,x)=u_0\in L^2(\R^2).
    \end{array}
  \right.
\end{equation}
with a cubic nonlinearity.

Here $h(\xi)$ is a radial dispersion relation on $\R^2$ which is
degenerate on the unit circle, i.e. its Hessian is degenerate there.
$P_{\leq M}$ is a cutoff in the frequency space selecting an annulus
near the unit circle; this is where the strongest nonlinear interactions
are occuring.

The interesting object here is the multiplier $A$, whose symbol
$A(\xi)$ vanishes to some order on the same circle.  We observe that
without $A(D)$, the worst nonlinear interaction of waves in the
power-type nonlinearity occurs when waves have equal frequencies on
the unit circle. Therefore intuitively the Fourier multiplier $A(D)$
provides a null structure in (\ref{modeleqn}) by eliminating the worst
nonlinear interaction.

We will show that such types of null structures are indeed effective
in controlling the long time dynamics, provided that the order of
vanishing of $A(\xi)$ on the unit circle is no less than half of that
of the order of degeneracy there for the Hessian of $H$.  Under these
conditions, we will prove that small data leads to global scattering
solutions.

The placement of the multiplier $A(D)$ in the equation is less important. The same result
holds if $A(D)$ applies instead to any of the factors in the cubic
nonlinearity, e.g. as in $P_{\leq M}(A(D)u P_{\leq M} \overline{u} P_{\leq M} u)$, etc.

We now describe our model in more detail. First, we suppose there
exists $\delta\in (0,1)$ and a smooth function
$\gamma:(1-\delta,1+\delta)\rightarrow \R$ such that
$h(\xi)=\gamma(|\xi|).$ We make the following assumptions on $\gamma$:

\begin{enumerate}\label{assumptiongamma}
\item Transversality: $|\gamma'(r)| \approx 1$ for every $r \in (1-\delta,1+\delta).$

\item Degeneracy: There exists $\beta \in \Z_+$  such that
  $|\gamma''(r)| \approx |r-1|^\beta$ for every
  $r\in (1-\delta,1+\delta).$
\end{enumerate}

The first condition guarantees the transversality of waves with
angularly separated frequencies, and the second condition gives a
finite order $\beta$ of dispersion degeneracy on the unit circle.
Under these conditions, the worst cubic nonlinear interactions
will turn out to occur between equal frequencies on the unit circle.

The Fourier multiplier $P_{\leq M}$  is a smooth cutoff in the frequency
space selecting the region $|\xi|\in (1-2^{M+1},1+2^{M+1})$, where
the integer $M<-100$ is chosen such that $2^{M+4}<\delta$.
This is  introduced so that we restrict our attention to the
frequency region where the most interesting part of nonlinear
interactions occur.

Last but not least, the  Fourier multiplier $A(D)$ is given by a symbol $A(\xi)$ with the
property that
\begin{equation}\label{assumptionA}
  |A(\xi)| \lesssim  \left||\xi|-1\right|^{\beta/2}
\end{equation}
for $|\xi|\in (1-\delta,1+\delta),$ and
\begin{equation}\label{4}
  \supp A(\xi)\subset \{\xi\in \R^2: 1-2^{M-1} < |\xi|< 1+2^{M-1}\}.
\end{equation}


The motivation for considering the Cauchy problem (\ref{modeleqn})
comes from the study of long-term dynamics for water-wave systems.
For an incompressible, inviscid and irrotational fluid occupying a
time-dependent domain
$$\Omega_t=\{(x,y)\in \R^2\times \R: -H_0\leq y\leq h(x,t)\}$$
for some function $h(x,t)$ and for $t$ in some interval $I_t\subset
\R,$ the water-wave problem can be reduced (see for example
\cite{lannes2013water}) to the following formulation for unknowns
$h,\phi: \R^2_x \times I_t \rightarrow \R: $
\begin{equation}\label{waterwaveeqn}
  \begin{cases}
    \partial_t h=G(h)\phi \\
    \partial_t \phi =-gh + \dfrac{(G(h)\phi+\nabla h\cdot \nabla
      \phi)^2}{2(1+|\nabla h|^2)} -\dfrac{1}{2}|\nabla \phi|^2+\sigma
    \mathrm{div}\left( \dfrac{\nabla h}{(1+|\nabla h|^2)^{1/2}}
    \right),
  \end{cases}
\end{equation}
where $\sigma\geq 0$ is the surface tension coefficient, $g$ is the
gravitational constant, and
$$G(h):=\sqrt{1+|\nabla h|^2} \mathcal{N}(h).$$
Here $\mathcal{N}(h)$ is the Dirichlet-Neumann map associated to the
domain $\Omega_t,$ and $\phi(x,t)$ is the restriction of the velocity
potential to the boundary surface $y=h(x,t).$

The dispersion relation for the linearized equation of
(\ref{waterwaveeqn}) around the zero solution is given by
$$\Lambda(\xi)=\pm \sqrt{|\xi|(g+\sigma |\xi|^2)\tanh (H_0 |\xi|)}.$$
For simplicity of computation we consider the infinite depth case
$H_0\rightarrow \infty,$ in which case the dispersion relation becomes
$\Lambda=\pm \sqrt{g|\xi|(1+\sigma |\xi|^2/g)}.$ If we are considering
gravity-capillary water waves, that is $\sigma, g>0$, then
away from the origin, $\det( \nabla^2 \Lambda)$ vanishes exactly on a
circle, which implies that we will not have optimal dispersion of
waves. This motivates (and is an example of) our general dispersion
relations considered above.  Our nonlinearity has been chosen to be of
cubic type, as for this dispersion relation, nonlinear interactions in
quadratic nonlinearities are either non-resonant or
transversal. Because of this, one expects that the long-time dynamics
are in fact primarily  governed by cubic nonlinear interactions.

Returning to our model problem, we begin by observing that,
because of the frequency localization in the nonlinearity, by
  H\"{o}lder's inequality and Bernstein's inequality we have
\[
\|A(D)(P_{\leq M} u_1 \overline{P_{\leq M} u_2} P_{\leq M} u_3)\|_{L^2_x}\lesssim
 \|u_1\|_{L^2_x}\|u_2\|_{L^2_x}\|u_3\|_{L^2_x}.
\]
From this estimate one can easily establish local wellposedness of
(\ref{modeleqn}) in $L^2(\R^2)$ using a standard fixed point
argument (see Section \ref{proofmainsection}):

\begin{theorem} [Local well-posedness]
For every $R>0,$ there exists $T=T(R)>0$ such that for every $u_0\in L^2(\R^2)$
with $\|u_0\|_{L^2_x}\leq R,$ there exists a unique strong solution
$u\in C^0([0,T),L^2_x)$ to the Cauchy problem (\ref{modeleqn}), and
the solution map $$B_{L^2_x}(0,R)\rightarrow C^0([0,T),L^2_x),\,
u_0\mapsto u$$ is Lipschitz continuous.

\end{theorem}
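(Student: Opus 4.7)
The plan is to recast the Cauchy problem in Duhamel form and apply Banach's fixed point theorem in the space $X_T := C^0([0,T], L^2_x)$ equipped with the sup-in-time $L^2$ norm. Since $h$ is real-valued, the linear propagator $e^{ih(D)t}$ is a unitary operator on $L^2$, so the natural candidate contraction is
\[
\Phi(u)(t) := e^{ih(D)t} u_0 + \int_0^t e^{ih(D)(t-s)} A(D)\bigl( |P_{\leq M} u(s)|^2 P_{\leq M} u(s) \bigr)\, ds,
\]
and a strong $L^2$ solution is exactly a fixed point of $\Phi$.

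Combining the $L^2$ isometry of $e^{ih(D)t}$ with the trilinear estimate already recorded in the excerpt, I would estimate, for $u,v \in X_T$,
\[
\|\Phi(u)\|_{X_T} \leq \|u_0\|_{L^2} + CT \|u\|_{X_T}^3,
\]
and, by the standard polarization of the cubic nonlinearity and a second application of the same trilinear bound,
\[
\|\Phi(u) - \Phi(v)\|_{X_T} \leq CT \bigl( \|u\|_{X_T}^2 + \|v\|_{X_T}^2 \bigr) \|u-v\|_{X_T}.
\]
Given $\|u_0\|_{L^2} \leq R$, I would choose $T = T(R) > 0$ so small that $\Phi$ preserves the closed ball of radius $2R$ in $X_T$ and is a strict contraction on it, so Banach's fixed point theorem yields a unique fixed point $u \in X_T$. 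Uniqueness in the full class $C^0([0,T], L^2_x)$ (not only in the ball) then follows by a standard continuation argument: if two strong solutions share the same initial data, they agree on a maximal subinterval on which both lie in some larger ball, and the contraction estimate rules out the subinterval being proper.

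Lipschitz continuity of the data-to-solution map is a direct byproduct of the same estimate: if $u, v$ are the fixed points associated to $u_0, v_0$, then
\[
\|u - v\|_{X_T} \leq \|u_0 - v_0\|_{L^2} + \tfrac{1}{2}\|u-v\|_{X_T},
\]
so $\|u-v\|_{X_T} \leq 2 \|u_0 - v_0\|_{L^2}$, after possibly shrinking $T$ once more (still depending only on $R$).

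There is no genuine obstacle here: the entire argument hinges on the trilinear $L^2$ bound displayed just above the theorem, which itself is immediate from H\"{o}lder's inequality combined with Bernstein's inequality applied to the frequency-localized factors $P_{\leq M} u_j$. The boundedness of $A(D)$ on $L^2$ (which follows from its bounded symbol) and of $P_{\leq M}$ on $L^2$ are the only mapping properties invoked; the precise vanishing assumption \eqref{assumptionA} on $A$ plays no role at this stage and will only become important for the long-time theory.
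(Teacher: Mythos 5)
Your argument is correct and matches the route the paper gestures at (the paper only asserts that local well\-posedness follows ``from this estimate... using a standard fixed point argument''; Section~\ref{proofmainsection} spells out the analogous contraction only for the global theory in $X^0$, not the local one in $C^0 L^2$). You have filled in exactly that standard argument: Duhamel reformulation, the trilinear $L^2$ bound (H\"older plus Bernstein on the $P_{\leq M}$-localized factors) gives a time-integrated cubic estimate with a factor $T$, and Banach's fixed point theorem on a ball in $C^0_t L^2_x$ delivers existence, uniqueness, and Lipschitz dependence on the data. The only cosmetic mismatch is the interval: the statement uses $C^0([0,T),L^2_x)$ whereas you work on $[0,T]$; restriction to the half-open interval is immediate, and your continuation argument for full uniqueness handles the class $C^0([0,T),L^2_x)$ as required.
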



The local well-posedness result above does not use at all the dispersive
properties of the equation. This, however, becomes crucial if one
consider the global in time well-posedness question. This is the
main goal of this paper. Our result is as follows:

\begin{theorem}[Global wellposedness for small
  data]\label{smallglobalwellposed}
  There exists $\epsilon_0>0$ such that for every $u_0\in L^2(\R^2)$
  with $\|u_0\|_{L^2_x}\leq \epsilon_0,$ there
  exists a unique strong global solution $u\in X^0([0,\infty)) \cap
  C^0([0,\infty); L^2_x)$ to the Cauchy problem (\ref{modeleqn}), and the
  solution map
   $$B_{L^2_x}(0,\epsilon_0)\rightarrow X^0([0,\infty)) \cap C^0([0,\infty),L^2_x),\, u_0\mapsto u$$
   is Lipschitz continuous.

Furthermore, the solutions are
scattering, i.e. for each small data $u_0 \in L^2(\R^2)$ there exist
$u_+ \in L^2(\R^2)$, small, with Lipschitz dependence on $u_0$, so that
\[
\lim_{t \to  \infty} \left( u(t) - e^{it h(D)} u_{+} \right) = 0 \qquad \text{in } L^2(\R^2).
\]
 \end{theorem}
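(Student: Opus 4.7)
The plan is to reduce the theorem to a single global-in-time trilinear estimate, and then prove that estimate by combining localized Strichartz bounds with the null-structure cancellation provided by $A(D)$. I would set up a function space $X^0 = X^0([0,\infty))$ adapted to the linear flow $e^{ith(D)}$ --- concretely a $U^2_h$/$V^2_h$ atomic space along the lines of Koch--Tataru and Hadac--Herr--Koch --- so that (i) free solutions satisfy $\|e^{ith(D)}u_0\|_{X^0}\lesssim\|u_0\|_{L^2}$, (ii) one has the continuous embedding $X^0\hookrightarrow C^0([0,\infty);L^2_x)$, and (iii) the Duhamel integral obeys the standard duality bound via a dual space $DX^0$. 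Within this framework, global existence, uniqueness, Lipschitz dependence, and scattering for small data all reduce to the single global estimate
\[
\bigl\| A(D)(P_{\leq M} u_1\,\overline{P_{\leq M} u_2}\,P_{\leq M} u_3)\bigr\|_{DX^0} \lesssim \prod_{i=1}^{3}\|u_i\|_{X^0}.
\]

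To prove this trilinear bound, I would dyadically decompose each input in distance from the unit circle, writing $P_{\leq M}u_i=\sum_{\lambda_i\leq M}u_i^{\lambda_i}$ with $u_i^{\lambda}$ supported in the annulus $\{||\xi|-1|\approx 2^{\lambda}\}$, and further split each annular piece angularly at the scale dictated by the uncertainty principle on that annulus. On the radial scale $2^{\lambda}$ the Hessian of $h$ has principal curvatures $\gamma''(r)\approx 2^{\lambda\beta}$ radially and $\gamma'(r)/r\approx 1$ angularly, so the localized Strichartz and bilinear $L^2$ estimates for $e^{ith(D)}$ restricted to this region exhibit a characteristic loss of size $2^{\lambda\beta/2}$ relative to the non-degenerate Schr\"odinger setting. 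By (\ref{assumptionA}) the null symbol $A(\xi)$ contributes an exactly matching gain $2^{\lambda\beta/2}$ on the support of the output, and the balance $\tfrac{\beta}{2}+\tfrac{\beta}{2}=\beta$ closes the diagonal case uniformly in $\lambda$. Off-diagonal contributions are simpler: mismatched radial scales are defeated by the symbol decay and support of $A$, while mismatched angular sectors are defeated by the bilinear transversality coming from $|\gamma'(r)|\approx 1$, after which radial Plancherel and angular almost-orthogonality allow the dyadic and angular sums to be reassembled.

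The central obstacle is the fully resonant diagonal case --- three waves at the same radial scale $2^{\lambda}$ and in the same angular sector of the unit circle --- since it is precisely here that both the Strichartz loss and the null-structure gain saturate, and one needs the exponents to match exactly. Once the global trilinear estimate is in hand, small-data global well-posedness in $X^0$ follows from the contraction mapping theorem applied to the Duhamel map on a small ball in $X^0$, with Lipschitz dependence for free. Scattering then follows from the global-in-time nature of the estimate: defining
\[
u_+ := u_0 + \int_0^\infty e^{-ish(D)}A(D)\bigl(|P_{\leq M}u|^2 P_{\leq M}u\bigr)(s)\,ds,
\]
the trilinear estimate applied on $[T,\infty)$ shows that the tail of this Duhamel integral tends to zero in $L^2$ as $T\to\infty$, yielding convergence of $e^{-ith(D)}u(t)$ to $u_+$ in $L^2$ with Lipschitz dependence on $u_0$.
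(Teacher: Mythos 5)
Your proposal follows the same strategy as the paper: adapted $U^2/V^2$ atomic spaces associated to $e^{ith(D)}$, dyadic decomposition in the radial distance from the unit circle combined with angular sectorization, localized dispersive/Strichartz and transversal bilinear $L^2$ estimates in which the Strichartz/curvature loss on a $2^{\lambda}$-annulus is cancelled by the vanishing of $A$ there, and closure by contraction mapping with scattering read off from the global Duhamel tail. The one substantive technical ingredient you elided, which the paper develops as Proposition~\ref{U2toV2}, is the interpolation transferring the transversal bilinear estimate from $U^2$-based inputs to $V^2$-based inputs (at the cost of a logarithmic factor later absorbed by the exponential off-diagonal gain), without which the duality pairing against the $V^2$-based space $Y^0$ does not close; you should also note that the exponent bookkeeping in the fully resonant case is a cancellation (four $L^4_{t,x}$ factors each losing $2^{-\beta k/8}$ against one gain of $2^{\beta k/2}$ from $A$), not a sum $\beta/2+\beta/2=\beta$ as written.
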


\begin{remark}
  The above wellposedness and scattering result also holds for small solutions that are backward in time. Indeed if $u(t,x)$ is a solution to (\ref{modeleqn}), then $\overline{u(-t,x)}$ is a solution to (\ref{modeleqn}) with $A(D)$ replaced by $A(-D),$ and initial data $\overline{u_0}.$
\end{remark}

The function space $X^0$ captures the
dispersive properties of the solutions, and we have the embedding
\[
X^0([0,\infty))\subset L^\infty_t L^2_x([0,\infty)\times \R^2).
\]
This is introduced in Section \ref{functionspacesection} and is defined
using $U^2$ type spaces associated to the corresponding linear flow.

A key role in the proof of our result is played by  localized Strichartz estimates and bilinear
$L^2$ estimates for solutions to the linear  homogeneous flow.  These  are  derived in Section
\ref{estimatessection}; we hope they will also be of independent interest due to the optimal
 treatment of the degeneracy.

 The linear and bilinear estimates are transferred to the $X^0$ space
 in Section~\ref{nonlinearestimate}. This in turn allows us to
 conclude the proof of our small data result in
 Section~\ref{proofmainsection}.

%

 \bigbreak
 \noindent {\bf Notations:}
In what follows  $\widehat{f}$ will denote the spatial Fourier transform of the function $f$,
\[
\widehat{f}(t,\xi)=\frac{1}{2\pi} \int_{\R^2}f(t,x)e^{-ix\cdot \xi}dx,
\]
and $\widecheck{g}$ will denote the inverse Fourier transform, such that
$(\widehat{f})\,\widecheck{}=f$. We denote the standard inner product on $L^2$ by
 $\langle\, ,\,
\rangle,$ that is
\[
\langle f,g \rangle =\int f\overline{g}.
\]

If $X,Y$ are two subsets of $\R^n,$ then $X\Subset Y$ means that $X$ is
contained in a compact subset of $Y$, and $X+Y$ denotes the set
$\{x+y:x\in X, y\in Y\}$.  $1_X$ will denote the indicator function of
the set $X$. The diameter of $X$ is defined as
\[
\mathrm{diam}\, X =\sup_{x,x'\in X} |x-x'|.
\]

We let $A\lesssim B$ denote the statement there exists a  constant $c>0$
such that $A\leq cB,$ and let $A\gtrsim B$ be the statement $B\lesssim
A.$ We also let $A\sim B$ be the statement $A\lesssim B$ and
$B\lesssim A.$ We need to be careful what those implicit constants
depend on. Unless otherwise specified, they depend only on $\gamma$
(and related parameters appeared in this section), and $\chi$ which is
chosen in Section \ref{functionspacesection}. We call those constants
admissible. We will also sometimes describe the dependence explicitly.

\section{Function Spaces}\label{functionspacesection}

To prove Theorem \ref{smallglobalwellposed}, we will use a
perturbative argument. Our function spaces will be of $U^p,V^p$ type, see
\cite{kochtataru2005upvp}. Since most results in this section
concerning $U^p,V^p$ type spaces have already been well developed, we
will usually only provide references but no proofs for them. A
detailed exposition of those spaces can be found in
\cite{kochtataruvisan2014dispersive}.  Unless otherwise stated, we
assume $p\in (1,\infty).$

Let $\mathcal{Z}$ be the collection of finite partitions of the real
line
$$\mathcal{Z}=\{(t_0,\ldots, t_K): -\infty<t_0< \cdots <t_K=\infty\}.$$

\begin{definition}{\label{Up}}
  We call the function $a:\R\rightarrow L^2(\R^2)$ a $U^p-$atom if
  $$a=\sum_{i=1}^{K}1_{[t_{i-1}, t_{i})}\phi_{i-1}$$
  for some partition $(t_j)\in \mathcal{Z},$ and some $\phi_i\in
  L^2(\R^2)$ satisfying $\sum_{i=0}^{K-1}\|\phi_i\|^p_{L^2(\R^2)}=1.$
  We then define $U^p(\R,L^2(\R^2))$ to be the space of functions
  $u:\R\rightarrow L^2(\R^2)$ such that
  $$\|u\|_{U^p}:=\inf\left\{\sum_{j=1}^{\infty}|\lambda_j|: u=\sum_{j=1}^{\infty}\lambda_ja_j, \, \lambda_j\in \C, \, a_j \text{ are } U^p\text{-atoms}\right\}<\infty,$$
  with norm $\|\cdot\|_{U^p}.$ Here we adopt the convention that $\inf
  \emptyset=\infty.$
\end{definition}

\begin{definition}
  $V^p(\R,L^2(\R^2))$ is defined to be the space of functions
  $v:\R\rightarrow L^2(\R^2)$ for which
  $$\|v\|_{V^p}:=\sup_{(t_k)_k\in \mathcal{Z}}\left( \sum_{k=1}^{K} \|v(t_k)-v(t_{k-1})\|_{L^2(\R^2)}^p \right)^{1/p}<\infty,$$
  where we use the convention that $v(t_K)=0$ if $t_K=\infty.$ We let
  $V_{rc}^p(\R,L^2(\R^2))$ be the subspace consisting of
  right-continuous functions $v:\R\rightarrow L^2(\R^2)$ in $V^p$ such
  that $\lim_{t\rightarrow -\infty} v(t)=0.$
\end{definition}

\begin{remark} \hfil
  \begin{itemize}
  \item $U^p(\R,H),V^p(\R,H)$ spaces can be similarly defined for
    functions from the real line to any complex Hilbert space $H,$ but
    for our application in this paper we will only consider functions
    with $H=L^2(\R^2).$ We will usually omit the space $L^2(\R^2)$ and
    domain $\R$ in the notation, writing $U^p,V^p$ instead.
  \item For $1\leq p<q<\infty,$ we have the embeddings
    \begin{equation}\label{embeddingUpVp}
      U^p\hookrightarrow V^p_{rc}\hookrightarrow U^q\hookrightarrow L^\infty_t L^2_x,
    \end{equation}
    a proof of which can be found in
    \cite{kochtataruvisan2014dispersive}.
  \item $U^p,V^p$ spaces are Banach spaces. $V^p_{rc}$ is a closed
    subspace of $V^p,$ which can be seen immediately from
    definitions and the embedding $V^p_{rc}\hookrightarrow L^\infty_t
    L^2_x.$
  \item $U^2,V^2$ spaces behave well under time truncation and
    frequency truncation. More precisely we have
    \begin{equation}\label{-99}
      \|1_{[a,b)}u\|_{U^2}\lesssim \|u\|_{U^2}
    \end{equation}
        $$\|P_ku\|_{U^2}\lesssim \|u\|_{U^2},$$
        where the implicit constants are independent of $[a,b)$ or
        $k$. For the definition of $P_k$ see Definition
        \ref{projector} below. The same inequalities hold if $U^2$ is
        replaced by $V^2.$ These estimates can be easily checked using
        definitions.
      \end{itemize}
    \end{remark}

\begin{definition}
  We let $U^p_{h(D)}$ be the space
  $$U^p_{h(D)}:=\left\{e^{ith(D)}u: u\in U^p\right\}$$
  with the norm
  $$\|u\|_{U^p_{h(D)}}=\|e^{-ith(D)}u\|_{U^p}.$$
  We similarly define spaces $V^p_{h(D)},$ $V^p_{h(D),rc}$
  corresponding to $V^p,$ $V^p_{rc}$ respectively.
\end{definition}
Those spaces are Banach spaces since  the original spaces are, and
$e^{ith(D)}$ is unitary on $L^2$.

As can be seen in the next few sections, our strategy is to perform a
dyadic decomposition around the singular set $|\xi|=1.$ To make this
precise, we let $\chi:\R \rightarrow [0,1]$ be a smooth function
satisfying
$$\chi(r)=\begin{cases}
  1 & \mbox{if } |r|\leq 1/2, \\
  0 & \mbox{if } |r|\geq 3/4.
\end{cases}$$

\begin{definition}[Dyadic decomposition around the unit circle]\label{projector}
  For $k< 0,$ we let $P_{\leq k}$ be the Fourier multipliers given by
  symbols
  \begin{equation}\label{projectoreqn}
    P_{\leq k}(\xi)=\chi\left(2^{-k}(|\xi|-1))\right).
  \end{equation}
  We then define $P_{k}=P_{\leq k}-P_{\leq k-1}.$ Also expressions
  like $P_{k_1< \cdot \leq k_2}$ are defined in the obvious way. We
  let $P_0$ be the Fourier multiplier with symbol
  $$P_0(\xi):=1-P_{<0}(\xi).$$
\end{definition}

\begin{remark}
  We observe that $P_k(\xi)$ is supported on the disjoint union of two
  annuli
\[
\{ 1-2^{k}< |\xi|< 1-2^{k-2}\} \cup \{ 1+2^{k-2}< |\xi|< 1+2^k\}.
\]
We write $P_k(\xi)=P_k^+(\xi)+P_k^-(\xi)$ where $P_k^+$ and $P_k^-$
are supported on $1+2^{k-2}\leq |\xi|\leq 1+2^k$ and $1-2^{k}\leq
|\xi|\leq 1-2^{k-2}$ respectively. We can of course do the same
decomposition to $P_{k_1\leq \cdot \leq k_2}.$ It is sometimes
convenient for us to consider the $+$ part and the $-$ part
separately, and usually the same argument works for both.
\end{remark}

We also observe that projections $P_k$ are almost orthogonal, and
therefore we have
\begin{equation}\label{almostalthogonal}
  \|u\|_{L^2(\R^2)} \sim \left( \sum_{k=-\infty}^{0}\|P_ku\|_{L^2(\R^2)}^2 \right)^{1/2}.
\end{equation}

Now we can introduce the function spaces we will use in the fixed
point argument.

\begin{definition}\label{Y0Z0space}
  Let $Y^0$ be the space of functions $v:\R\rightarrow L^2(\R^2)$ such
  that $P_kv\in V^2_{h(D),rc}$ and
  $$\|v\|_{Y^0}:=\left( \sum_{k=-\infty}^{0}\|P_k v\|_{V^2_{h(D)}}^2 \right)^{1/2}<\infty,$$
  endowed with the above norm.  Let $X^0$ be the space of functions
  $u:\R\rightarrow L^2(\R^2)$ for which $P_kv\in U^2_{h(D)}$ and
  $$\|u\|_{X^0}:=\left( \sum_{k=-\infty}^{0}\|P_k v\|_{U^2_{h(D)}}^2 \right)^{1/2}<\infty,$$
  endowed with the above norm.
\end{definition}

\begin{remark}\label{X0Y0embedLinfL2}
  We first observe that $\|\cdot\|_{L^\infty_t L^2_x} \lesssim
  \|\cdot\|_{Y^0},\, \|\cdot\|_{X^0},$ which is an immediate
  consequence of (\ref{embeddingUpVp}) and (\ref{almostalthogonal}),
  and both spaces are Banach spaces under respective norms. Moreover
  using (\ref{almostalthogonal}) we can check the embeddings
  \begin{equation}\label{embeddingX0Y0}
    U^2_{h(D)}\hookrightarrow X^0\hookrightarrow Y^0 \hookrightarrow V^2_{h(D)}.
  \end{equation}
  Also, $X^0,Y^0$ behave well under time truncation and frequency
  truncation in the sense of (\ref{-99}), since $U^2,V^2$ spaces do.
\end{remark}

We can also consider the time restricted spaces $X^0([a,b)),$ where we
allow $b$ to be infinity.

\begin{definition}
  We let $X^0([a,b))$ be the space of functions $u:[a,b)\rightarrow
  L^2(\R^2)$ such that the zero extension $\tilde{u}$
  $$\tilde{u}(t)=
  \begin{cases}
    u(t) & \mbox{if } t\in [a,b) \\
    0 & \mbox{otherwise}.
  \end{cases}
  $$
  belongs to $X^0.$ We put norm
  $\|u\|_{X^0([a,b))}=\|\tilde{u}\|_{X^0}$ on the space $X^0([a,b)).$
\end{definition}

Since $\|\cdot\|_{L^\infty_tL^2_x} \lesssim \|\cdot\|_{X^0},$ we
conclude that under the identification $u\mapsto \tilde{u},$
$X^0([a,b))$ is a closed subspace of $X^0$ and therefore $X^0([a,b))$
is a Banach space with the norm $\|\cdot\|_{X^0([a,b))}.$ If we write
$X^0$ without specifying intervals, we always mean $X^0(\R).$

We also need the following duality property between $U^p$ and $V^{p'}$
spaces. This is a consequence of Theorem 2.10 and Remark 5 in
\cite{hadac2009well}, and Lemma 4.32 in \cite{kochtataruvisan2014dispersive}.
\begin{proposition}\label{duality}
  Suppose $u:\R\rightarrow L^2_x(\R^2)$ is absolutely continuous on compact intervals and $u'(t)=0$ on $(-\infty,0).$ Then we have
  \begin{equation}\label{p1}
    \|u\|_{U^2}=\sup_{v\in V^2_{rc}: \|v\|_{V^2}=1} \left|\int_\R \int_{\R^2} u'(t)v(t)dxdt\right|,
  \end{equation}
  by which we mean if the right hand side of (\ref{p1}) is finite then $u\in U^2$ and (\ref{p1}) holds.
\end{proposition}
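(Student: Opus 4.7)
The plan is to reduce the statement to the abstract $U^2$--$V^2_{rc}$ duality proved in \cite{hadac2009well,kochtataruvisan2014dispersive}. That framework furnishes a bounded bilinear form $B\colon U^2\times V^2_{rc}\to\C$ satisfying $|B(u,v)|\le\|u\|_{U^2}\|v\|_{V^2}$ and inducing an isometric isomorphism $T\colon U^2\to(V^2_{rc})^*$, yielding
\[
\|u\|_{U^2}=\sup\bigl\{|B(u,v)|:v\in V^2_{rc},\ \|v\|_{V^2}=1\bigr\}.
\]
On $U^2$-atoms $a=\sum_{i=1}^K 1_{[t_{i-1},t_i)}\phi_{i-1}$ the form is the telescoping sum $B(a,v)=\sum_{i=1}^K\langle\phi_{i-1},v(t_{i-1})-v(t_i)\rangle$ (with the convention $v(\infty)=0$), and for a general $u\in U^2$ it is the limit of the analogous Riemann--Stieltjes sums along refining partitions. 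What remains is to identify $B(u,v)$ with $\int_\R\int_{\R^2}u'(t)v(t)\,dx\,dt$ under the hypotheses of the proposition.

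For $u\in U^2$ absolutely continuous on compacts with $u'\equiv 0$ on $(-\infty,0)$, summation by parts on any partition $\Pi=(t_k)\in\mathcal Z$ gives
\[
\sum_k\langle u(t_k)-u(t_{k-1}),v(t_{k-1})\rangle=\sum_k\int_{t_{k-1}}^{t_k}\langle u'(t),v(t_{k-1})\rangle\,dt.
\]
Right-continuity of $v$ together with dominated convergence---using the embedding $V^2\hookrightarrow L^\infty_tL^2_x$ from (\ref{embeddingUpVp}) and the local integrability $u'\in L^1_{t,loc}L^2_x$---yields $\int_\R\langle u'(t),v(t)\rangle\,dt$ in the limit $\|\Pi\|\to 0$. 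Hence $B(u,v)$ matches the right-hand side of (\ref{p1}), and the supremum identity above specializes to (\ref{p1}) in this case.

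For the converse, suppose the right-hand side of (\ref{p1}) is finite. Then $\Lambda(v):=\int_\R\int_{\R^2}u'(t)v(t)\,dx\,dt$ defines a bounded linear functional on $V^2_{rc}$, so via $T^{-1}$ there exists a unique $\tilde u\in U^2$ with $T(\tilde u)=\Lambda$ and $\|\tilde u\|_{U^2}=\|\Lambda\|_{(V^2_{rc})^*}$ equal to the right-hand side. The preceding paragraph applied to $\tilde u$ then implies $\int_\R\int_{\R^2}(\tilde u'-u')v\,dx\,dt=0$ for every $v\in V^2_{rc}$; testing against $v_{s,\phi}(t,x):=1_{[s,\infty)}(t)\phi(x)\in V^2_{rc}$ for arbitrary $\phi\in L^2_x$ and $s\in\R$, and invoking the hypothesis $u'\equiv 0$ on $(-\infty,0)$ (which matches the vanishing of $\tilde u$ at $-\infty$ inherent in $\tilde u\in U^2$), identifies $u(s)=\tilde u(s)$ pointwise and hence $u\in U^2$. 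The main technical subtlety lies in the Riemann--Stieltjes-to-integral convergence in the middle paragraph, which hinges on the interplay of right-continuity of $v$ and absolute continuity of $u$; everything else follows formally from the cited duality.
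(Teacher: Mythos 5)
The paper offers no proof of this proposition, instead citing Theorem~2.10 and Remark~5 of \cite{hadac2009well} and Lemma~4.32 of \cite{kochtataruvisan2014dispersive}, so there is no in-paper argument to compare against; I assess yours on its own terms. Your forward direction---reducing the pairing $B(u,v)$ to $\pm\int_\R\langle u'(t),v(t)\rangle\,dt$ by summation by parts along refining partitions and then passing to the limit---is correct in essence and closely tracks Proposition~2.10 of \cite{hadac2009well}. One small inaccuracy: with the convention $B(u,v)=\sum_k\langle u(t_{k-1}),v(t_k)-v(t_{k-1})\rangle$, summation by parts lands on $-\sum_k\langle u(t_k)-u(t_{k-1}),v(t_k)\rangle$, i.e.\ the \emph{right} endpoints, and that is precisely why right-continuity of $v$ is the natural hypothesis; with $v(t_{k-1})$ as you wrote, one must instead appeal to the fact that a $V^2$ function has at most countably many discontinuities, since $t_{k-1}\to t$ from below gives only $v(t_{k-1})\to v(t^-)$.

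The converse implication (right-hand side finite $\Rightarrow u\in U^2$), which is the substantive content of the proposition, is where your proof has a genuine gap. You invoke $T^{-1}$ and hence the claim that $T\colon U^2\to(V^2_{rc})^*$ is an \emph{onto} isometry, i.e.\ that $V^2_{rc}$ is a predual of $U^2$. The duality theorem actually proved in \cite{hadac2009well} is $(U^2)^*=V^2$; this runs in the opposite direction and does not imply your claim (compare $(c_0)^*=\ell^1$ while $(\ell^1)^*=\ell^\infty\neq c_0$). No argument is given for surjectivity of $T$, and it is not a consequence of the cited results. Furthermore, even granting the existence of $\tilde u\in U^2$ with $T(\tilde u)=\Lambda$, the step ``the preceding paragraph applied to $\tilde u$'' requires $\tilde u$ to be absolutely continuous on compact intervals, whereas a generic element of $U^2$ (any atom, for instance) is a right-continuous step function with no derivative $\tilde u'$ in the sense used, so the comparison $\tilde u'-u'$ is not available. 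A workable route for this direction is instead to bound $\|u_\Pi\|_{U^2}$ for step-function approximants $u_\Pi$ of $u$ by the right-hand side of (\ref{p1})---this only uses the $(U^2)^*=V^2$ duality in the direction it actually goes---and then pass to the limit along refining partitions.
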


Suppose $T\in [0,\infty].$ We let $I_T$ be the linear operator given
by
\begin{equation}\label{IToperator}
  I_T(f)(t,x)=\int_{0}^{t}1_{[0,T)}(s)e^{i(t-s)h(D)}f(s)ds.
\end{equation}
We claim that if $f\in L^1_{t,loc}L^2_x,$ then $e^{-ith(D)}I_T(f)$
satisfies the conditions in Proposition \ref{duality}. To show that $e^{-ith(D)}I_T(f)$ is
absolutely continuous on compact intervals we notice that
  $$\partial_t\left( e^{-ith(D)}I_T(f)\right)=1_{[0,T)}(t)e^{-ith(D)}f(t),$$
  and by the assumption on $f$ we have $1_{[0,T)}(t)e^{-ith(D)}f(t)\in
  L^1_{t,loc}L^2_x.$ Therefore by the fundamental theorem of calculus for
  Banach space valued functions we have that $e^{-ith(D)}I_T(f)$ is
  absolutely continuous on compact intervals. The previous computation also shows that
  $$\partial_t\left( e^{-ith(D)}I_T(f)\right)=0 \text{ on } (-\infty,0).$$

  As an application of Proposition \ref{duality} we prove the
  following estimate.
  \begin{proposition}\label{dualityX0}
    If $f\in L^1_{t,loc}L^2_x,$ and $T\in [0,\infty],$ then we have
    $$\|I_T(f)(t,x)\|_{X^0}\lesssim \sup_{\|v\|_{Y^0}\leq 1}
    \left|\int_{0}^{T}\int_{\R^2} f(t,x)\overline{v(t,x)}
      dxdt\right|.$$
  \end{proposition}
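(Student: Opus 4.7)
The plan is to reduce to a dyadic piece, apply Proposition \ref{duality} to each piece, and then assemble the dual test functions into one test function in $Y^0$ using almost-orthogonality.

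First I would unfold the definition $\|I_T(f)\|_{X^0}^2 = \sum_k \|P_k I_T(f)\|_{U^2_{h(D)}}^2$. Since $P_k$ commutes with $e^{-ith(D)}$, the function $u_k := e^{-ith(D)} P_k I_T(f)$ is absolutely continuous on compact intervals with $u_k'(t) = 1_{[0,T)}(t)\, e^{-ith(D)} P_k f(t)$ and $u_k' \equiv 0$ on $(-\infty, 0)$, so Proposition \ref{duality} applies. Combining it with the substitution $V = e^{ith(D)} w$ (which is an isometry from $V^2_{rc}$ to $V^2_{h(D),rc}$) and the unitarity of $e^{-ith(D)}$ on $L^2_x$, the pairing transfers cleanly:
\[
\|P_k I_T(f)\|_{U^2_{h(D)}} = \sup_{V \in V^2_{h(D),rc},\, \|V\|_{V^2_{h(D)}} \le 1} \left|\int_0^T \int_{\R^2} P_k f(t,x)\, \overline{V(t,x)}\, dx\, dt\right|.
\]
Moving $P_k$ (self-adjoint, real-valued symbol) to the other factor and using the frequency-truncation estimate $\|P_k V\|_{V^2_{h(D)}} \lesssim \|V\|_{V^2_{h(D)}}$ from \eqref{-99}, for each $k$ I pick a near-optimal $V_k$ with $\|V_k\|_{V^2_{h(D)}} \le 1$, multiplied by a unit phase so that $\int_0^T \int f\, \overline{P_k V_k}$ is real and nonnegative.

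Next, write the $X^0$ norm as an $\ell^2$-dual pairing: choosing $c_k := \|P_k I_T(f)\|_{U^2_{h(D)}}/\|I_T(f)\|_{X^0}$ (so that $\sum_k c_k^2 = 1$), I get
\[
\|I_T(f)\|_{X^0} = \sum_k c_k \, \|P_k I_T(f)\|_{U^2_{h(D)}} \le (1+\varepsilon) \int_0^T \int_{\R^2} f(t,x)\, \overline{v(t,x)}\, dx\, dt,
\]
where $v := \sum_k c_k\, P_k V_k$. The decisive point is that $v \in Y^0$ with admissible norm: because the cutoffs $P_k$ have nearly disjoint Fourier supports, $P_j v$ only picks up the handful of neighbors $|k-j| \lesssim 1$, so
\[
\|P_j v\|_{V^2_{h(D)}} \lesssim \sum_{|k-j|\le 2} |c_k|,
\]
and a Cauchy--Schwarz step yields $\|v\|_{Y^0}^2 \lesssim \sum_k c_k^2 = 1$. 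Letting $\varepsilon \to 0$ concludes the proof.

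The step I expect to need the most care is the assembly $v = \sum_k c_k P_k V_k$: one must check that this sum converges in $Y^0$ (handled by first truncating to finitely many $k$ and passing to the limit), that $P_k V_k$ can be taken in the right-continuous class so that $v \in V^2_{h(D),rc}$ and the duality pairing remains valid, and that the near-orthogonality of the $\{P_k\}$ really gives the claimed $Y^0$ bound with constants independent of $f$. The phase alignment and the switching of $\sum_k$ with $\int_0^T\!\!\int$ are routine once $V_k$ are chosen in a standard dense subclass.
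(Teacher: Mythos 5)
Your argument is essentially the paper's proof: apply Proposition~\ref{duality} to each dyadic piece $P_k I_T(f)$ (using $P_k I_T(f)=I_T(P_kf)$, that the frequency projections commute with $e^{-ith(D)}$, and Plancherel to move $P_k$ onto the test function), assemble $v=\sum_k c_k P_k V_k$ with $(c_k)\in\ell^2$ dual to $(\|P_kI_T(f)\|_{U^2_{h(D)}})_k$, and control $\|v\|_{Y^0}$ by the almost-orthogonality $P_N v = P_N\bigl(\sum_{|k-N|\le 2}c_k P_k V_k\bigr)$ together with the boundedness of $P_k$ on $V^2$. The only cosmetic difference is that you phrase the per-$k$ duality directly in $V^2_{h(D),rc}$ (and absorb an $\varepsilon$ multiplicatively), while the paper states it in $V^2_{rc}$ and conjugates by $e^{ith(D)}$ (absorbing $\sigma$ additively); these are interchangeable.
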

  \begin{proof}
    Fix $\sigma>0.$ Observe that since
    $$\|I_T(f)(t,x)\|_{X^0}=\left\| \left( \| P_k I_T(f) \|_{U^2_{h(D)}} \right)_{k\leq 0} \right\|_{l^2},$$
    there exists a sequence $(b_k)_{k\leq 0}$ in $\R$ such that
    $\|(b_k)\|_{l^2}=1$ and
    $$\|I_T(f)(t,x)\|_{X^0}\leq  \sum_{k} b_k \| P_k I_T(f) \|_{U^2_{h(D)}}+\sigma . $$
    Note that $P_kI_T(f)=I_T(P_kf)$ and $P_kf\in L^1_{t,loc}L^2_x.$ By
    Proposition \ref{dualityX0}, there exists $v_k\in V^2_{rc}$ with
    $\|v_k\|_{V^2}=1$ such that
    \begin{align*}
      \| I_T(P_kf) \|_{U^2_{h(D)}} & \leq \int_\R \int_{\R^2}1_{[0,T)}(t) e^{-ith(D)}  P_k f(t) \overline{v_k(t)}dxdt +2^k\sigma \\
      & = \int_\R \int_{\R^2}1_{[0,T)}(t) f(t) \overline{e^{ith(D)}
        P_k v_k(t)}dxdt +2^k\sigma.
    \end{align*}
    where we have used Plancherel's theorem to move the Fourier
    multiplier $e^{-ith(D)}P_k$ to act on $\overline{v_k}.$ Now we let
    $v=\sum_k b_k e^{ith(D)}P_k v_k.$ Then we have
    \begin{align*}
      \|I_T(f)(t,x)\|_{X^0} & \leq \sum_k b_k \| I_T(P_kf) \|_{U^2_{h(D)}}+\sigma \\
      & \leq \sum_k \int_\R \int_{\R^2}1_{[0,T)}(t) f(t) \overline{b_k e^{ith(D)}  P_k v_k(t)}dxdt +3\sigma \\
      & = \int_{0}^{T}\int_{\R^2} f(t)\overline{v(t)}dxdt +3\sigma.
    \end{align*}
    Also note that $P_Nv=P_{N}\left(\sum_{k=N-2}^{N+2}P_kv_k\right)$
    is right continuous since by assumption $v_k$ are, and by the
    triangle inequality and (\ref{-99}) we have
    \begin{align*}
      \|v\|_{Y^0}^2 & = \sum_N \left\| P_{N}  \left(\sum_{k=N-2}^{N+2}b_ke^{ith(D)}P_k v_k\right) \right\|_{V^2_{h(D)}}^2 \\
      & \lesssim \sum_N \left( \sum_{k=N-2}^{N+2} b_k^2\|e^{ith(D)}v_k\|_{V^2_{h(D)}}^2 \right) \\
      & \lesssim \sum_N \left( \sum_{k=N-2}^{N+2} b_k^2\|v_k\|_{V^2}^2 \right)  \\
      & \lesssim 1.
    \end{align*}
    Since we can always normalize $v$ (in a way that is independent of
    $\sigma$) such that $\|v\|_{Y^0}\leq 1$ and $\sigma$ is arbitrary,
    we obtain the desired result.
  \end{proof}
  \begin{remark}\label{ITL1L2remark}
    The proof also shows that if $f\in L^1_{t,loc}L^2_x,$ then for
    every $T\in [0,\infty],$
  $$\|I_T(f)\|_{X^0}\lesssim \|f\|_{L^1_t([0,T),L^2_x)},$$
  and the implicit constant is independent of $T.$ This is because of
  H\"{o}lder's inequality and the embedding $Y^0\subset L^\infty_t
  L^2_x.$
\end{remark}

\section{Localized Linear and Bilinear Estimates}{\label{estimatessection}}

In this section we consider linear and bilinear Strichartz estimates for our problem,
localized to dyadic anuli $\{ ||\xi| -1| \approx 2^k\}$.
We begin with the following localized Strichartz estimate.
\begin{proposition}\label{linearestimate}
  Suppose $k\leq M+1.$ Then we have
  \begin{equation}\label{-1}
      \left\|e^{ith(D)}P_{k}u_0\right\|_{L^4_{t,x}}\lesssim 2^{-\beta k/8}\left\|P_ku_0\right\|_{L^2_x}.
  \end{equation}
  As a consequence, for $k\leq M+1$ we have
  \begin{equation}\label{15}
    \left\|e^{ith(D)}P_{k\leq \cdot \leq M+1}u_0\right\|_{L^4_{t,x}}\lesssim 2^{-\beta k/8} \left\|P_{k\leq \cdot \leq M+1}u_0\right\|_{L^2_x}.
  \end{equation}
\end{proposition}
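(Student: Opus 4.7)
\textbf{Proof plan for Proposition \ref{linearestimate}.} The plan is to derive the $L^4_{t,x}$ bound via the $TT^*$ method, based on a pointwise dispersive estimate for the kernel of $e^{ith(D)}P_k$ that quantifies the loss caused by the degeneracy of $h$ at $|\xi|=1$.

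First I would establish the dispersive bound
\[
\|K_t\|_{L^\infty_x}\ \lesssim\ \min\bigl(2^{k},\ |t|^{-1}\,2^{-\beta k/2}\bigr),\qquad K_t(x)\ =\ \int_{\R^2} e^{i(x\cdot \xi + t h(\xi))}P_k(\xi)\,d\xi.
\]
The $2^k$ bound is immediate from the fact that $\supp P_k$ has area $\sim 2^k$. For the second bound I apply two-dimensional non-degenerate stationary phase to the phase $\Phi(\xi)=x\cdot\xi+t\gamma(|\xi|)$; on $\supp P_k$ the Hessian $\nabla^2\Phi=t\,\nabla^2 h$ has eigenvalues $t\gamma''(|\xi|)\sim t\,2^{k\beta}$ in the radial direction and $t\gamma'(|\xi|)/|\xi|\sim t$ in the angular direction, so $|\det\nabla^2\Phi|\sim t^2\,2^{k\beta}$ uniformly in $\xi$, which yields $|K_t(x)|\lesssim |t|^{-1}2^{-\beta k/2}$ at any critical point. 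The transversality assumption $|\gamma'|\approx 1$ confines critical points to $|x|\approx|t|$, and integration by parts in $\xi$ produces stronger decay outside this region.

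Second, combining the dispersive bound with the trivial energy bound $\|K_t\ast_x f\|_{L^2_x}\lesssim \|f\|_{L^2_x}$ (from $\|P_k\|_{L^\infty_\xi}\lesssim 1$) and interpolating gives the spatial estimate
\[
\|K_t\ast_x f\|_{L^4_x}\ \lesssim\ \|K_t\|_{L^\infty_x}^{1/2}\,\|f\|_{L^{4/3}_x}\ \lesssim\ \min\bigl(2^{k/2},\ |t|^{-1/2}\,2^{-\beta k/4}\bigr)\,\|f\|_{L^{4/3}_x}\ \leq\ 2^{-\beta k/4}\,|t|^{-1/2}\,\|f\|_{L^{4/3}_x}.
\]
Third, I invoke the $TT^*$ identity: \eqref{-1} is equivalent to $\|TT^* g\|_{L^4_{t,x}}\lesssim 2^{-\beta k/4}\|g\|_{L^{4/3}_{t,x}}$, with $TT^*g(t,x)=\int[\tilde K_{t-s}\ast_x g(s,\cdot)](x)\,ds$ and $\tilde K_t$ the kernel with symbol $P_k(\xi)^2$ (which obeys the same bound). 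Applying the previous estimate pointwise in $t$ reduces this to the one-dimensional Hardy--Littlewood--Sobolev inequality
\[
\bigl\|\,|t|^{-1/2}\ast_t G\,\bigr\|_{L^4_t}\ \lesssim\ \|G\|_{L^{4/3}_t},
\]
applied to $G(s)=\|g(s,\cdot)\|_{L^{4/3}_x}$. This produces the desired bound on $TT^*$ with prefactor $2^{-\beta k/4}$, and the $TT^*$ duality gives $\|T\|_{L^2_x\to L^4_{t,x}}^2\lesssim 2^{-\beta k/4}$, which is \eqref{-1}.

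Finally, \eqref{15} follows from \eqref{-1} by the Littlewood--Paley square-function inequality in $L^4$,
\[
\Bigl\|\sum_{k\leq j\leq M+1} P_j u\Bigr\|_{L^4_{t,x}}^{2}\ \lesssim\ \sum_{k\leq j\leq M+1}\|P_j u\|_{L^4_{t,x}}^{2},
\]
combined with the monotonicity $2^{-\beta j/4}\leq 2^{-\beta k/4}$ for $j\geq k$. The main obstacle is the dispersive estimate: the chain $2^{-\beta k/2}\to 2^{-\beta k/4}\to 2^{-\beta k/8}$ reflects, in order, the loss from the degenerate Hessian in stationary phase, the halving from interpolation against the energy bound, and the final halving from the $TT^*$ duality, so the exponent $\beta/8$ is essentially forced. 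The technical content is concentrated in verifying that the Hessian is uniformly non-degenerate on $\supp P_k$ with $|\gamma''|\sim 2^{k\beta}$, so that standard non-degenerate stationary phase actually applies and yields the claimed rate.
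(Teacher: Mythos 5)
Your overall architecture --- dispersive kernel bound, interpolation, $TT^*$, Hardy--Littlewood--Sobolev --- is exactly the paper's proof of \eqref{-1}, so the strategy is sound. There are, however, two places where the proposal glosses over or misstates the nontrivial content.

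\textbf{The dispersive estimate is not ``standard'' stationary phase.} You compute correctly that $\nabla^2 \Phi$ on $\supp P_k$ has one eigenvalue $\sim t$ (angular) and one $\sim t\,2^{\beta k}$ (radial), giving $|\det\nabla^2\Phi|\sim t^2 2^{\beta k}$. But with such a severely anisotropic Hessian, the textbook two-dimensional stationary phase lemma does not apply out of the box: its error terms involve higher derivatives of the phase and amplitude compared against the \emph{smallest} eigenvalue of the Hessian, and the amplitude $P_k$ itself has radial derivatives of size $2^{-k}$, so one has to justify that the constant in $|K_t(x)|\lesssim|\det\nabla^2\Phi|^{-1/2}$ is uniform as $k\to-\infty$. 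This is precisely where the paper spends its effort: after reducing to the critical regime $|x|\sim|t|$, it passes to polar coordinates, makes the substitution $\alpha^2 = r(1-\cos\theta)$ to turn the angular integral into a genuine Fresnel integral, applies Van der Corput there, and then applies Van der Corput a second time in $r$ using $|\partial_r^2(r|x_0|+t_0\gamma(r))|\gtrsim 2^{\beta k}$. You should either carry out an explicit anisotropic rescaling that renormalizes both eigenvalues to size one and checks uniformity of derivative bounds, or invoke Van der Corput as the paper does; simply citing nondegenerate stationary phase is a real gap, since that lemma's hypotheses are not met uniformly here.

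\textbf{The deduction of \eqref{15} is over-engineered and possibly incorrect as stated.} Your proposed square-function inequality $\|\sum_{j} P_j u\|_{L^4_{t,x}}^2\lesssim\sum_j\|P_j u\|_{L^4_{t,x}}^2$ is delicate: the $P_j$ project onto \emph{thin} annuli near $|\xi|=1$, not the classical Littlewood--Paley annuli around the origin, and square-function estimates for such decompositions are of C\'{o}rdoba/Bochner--Riesz type rather than standard Littlewood--Paley. You do not need it. Because of the exponential gain $2^{-\beta j/8}$ (with $\beta\geq 1$), the plain triangle inequality already does the job: $\sum_{j=k}^{M+1}2^{-\beta j/8}\|P_j u_0\|_{L^2_x}\leq\bigl(\sum_{j\geq k}2^{-\beta j/4}\bigr)^{1/2}\bigl(\sum_j\|P_j u_0\|_{L^2_x}^2\bigr)^{1/2}\lesssim 2^{-\beta k/8}\|P_{k\leq\cdot\leq M+1}u_0\|_{L^2_x}$, using Cauchy--Schwarz, the geometric series, and the almost-orthogonality relation \eqref{almostalthogonal}. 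This is what the paper means by ``the triangle inequality.'' Replacing the square-function appeal with this elementary computation removes the risk entirely.
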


Before proving this proposition, we first establish the following localized dispersive estimate.

\begin{lemma}
  Let $\chi_k(D)$ be the Fourier multiplier $P_{k-2\leq \cdot \leq k+2}.$ Then we have
  \begin{equation}\label{dispersiveestimate}
    \left\|e^{ith(D)}\chi_k^2(D) u_0\right\|_{L^\infty_{x}}\lesssim \frac{2^{-\beta k/2}}{|t|}\|u_0\|_{L^1_x}.
  \end{equation}
\end{lemma}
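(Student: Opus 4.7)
The plan is to reduce the lemma to a pointwise estimate on the convolution kernel
\[
K_t(x) := \int_{\R^2} e^{i(x\cdot\xi + t\gamma(|\xi|))}\,\psi_k(\xi)\,d\xi,
\]
where $\psi_k := (P_{k-2\le \cdot\le k+2})^2$ is the radial symbol of $\chi_k^2(D)$, supported in $\{||\xi|-1|\approx 2^k\}$ and satisfying $|\psi_k^{(j)}|\lesssim 2^{-jk}$. By Young's convolution inequality the lemma reduces to proving $\|K_t\|_{L^\infty_x}\lesssim 2^{-\beta k/2}/|t|$. Passing to polar coordinates $\xi=r\omega$ and using $\int_{S^1} e^{ir|x|\cos\theta}\,d\theta = 2\pi J_0(r|x|)$ turns this into the one-dimensional oscillatory integral
\[
K_t(x) = 2\pi\int_0^\infty e^{it\gamma(r)}\,\psi_k(r)\,r\,J_0(r|x|)\,dr.
\]

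When $|x|\le 1$, the Bessel factor and its $r$-derivative are $O(1)$, so one integration by parts against $e^{it\gamma(r)}$, exploiting $|\gamma'|\approx 1$ and $\|\psi_k'\|_{L^1}\lesssim 1$, yields $|K_t(x)|\lesssim 1/|t|$. Since $2^{-\beta k/2}\ge 1$ for $k\le 0$, this already beats the target.

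For $|x|\ge 1$ I would split the Bessel function via its Hankel-type asymptotic, $J_0(z)=e^{iz}m_+(z)+e^{-iz}m_-(z)$ on $z\gtrsim 1$, with $|m_\pm^{(n)}(z)|\lesssim z^{-1/2-n}$. Writing $\phi_\pm(r):=t\gamma(r)\pm r|x|$ and $a_\pm(r):=\psi_k(r)\,r\,m_\pm(r|x|)$, a direct computation gives $\|a_\pm\|_{L^\infty}\lesssim |x|^{-1/2}$ and $\|a_\pm'\|_{L^1}\lesssim |x|^{-1/2}$ (the $2^{-k}$ from differentiating $\psi_k$ is exactly cancelled by the $2^k$ length of the support). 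The key quantitative input is
\[
|\phi_\pm''(r)| = |t|\,|\gamma''(r)|\approx |t|\,2^{\beta k}\quad\text{on } \supp\psi_k,
\]
so Van der Corput's second-derivative lemma produces
\[
\Big|\int e^{i\phi_\pm(r)}\,a_\pm(r)\,dr\Big|\lesssim \frac{\|a_\pm\|_{L^\infty}+\|a_\pm'\|_{L^1}}{\sqrt{|t|\,2^{\beta k}}}\lesssim \frac{2^{-\beta k/2}}{\sqrt{|t|\,|x|}}.
\]
This bound is $\le 2^{-\beta k/2}/|t|$ whenever $|x|\gtrsim |t|$. For the complementary range $1\le |x|\ll |t|$ the phase is non-stationary, $|\phi_\pm'|\gtrsim |t|$, and a single integration by parts gives $\lesssim |x|^{-1/2}/|t|\le 2^{-\beta k/2}/|t|$, closing the estimate.

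I expect the main obstacle to be the bookkeeping among three scales — the annular thickness $2^k$, the Bessel decay $|x|^{-1/2}$, and the lower bound $|t|\,2^{\beta k}$ on $|\phi_\pm''|$. In particular, one must verify that $\|a_\pm\|_{L^\infty}+\|a_\pm'\|_{L^1}$ stays of size $|x|^{-1/2}$, with the $2^{-k}$ produced by derivatives of $\psi_k$ exactly compensated by the $2^k$ measure of $\supp\psi_k$; it is this cancellation that delivers the sharp power $2^{-\beta k/2}$, matching half the order of degeneracy of $\nabla^2 h$ on the unit circle.
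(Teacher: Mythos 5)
Your proposal is correct, but it takes a genuinely different route from the paper's proof. After reducing to the kernel bound, the paper cases on the ratio $|t|/|x|$: when $|t|$ and $|x|$ are not comparable it runs a direct 2D integration by parts in $\xi$, and when $c|x|<|t|\le c^{-1}|x|$ it passes to polar coordinates, cuts the angular variable into pieces near $\theta=0,\pi$ and away, and on the near pieces performs the parabolic change of variable $\alpha^2 = r(1-\cos\theta)$ followed by Van der Corput twice (first in $\alpha$, then in $r$). You instead pass immediately to the Bessel representation $K_t(x)=2\pi\int e^{it\gamma(r)}\psi_k(r)\,r\,J_0(r|x|)\,dr$, import the Hankel asymptotic $J_0(z)=e^{iz}m_+(z)+e^{-iz}m_-(z)$ with $|m_\pm^{(n)}|\lesssim z^{-1/2-n}$, and then split according to whether the one-dimensional phase $\phi_\pm=t\gamma\pm r|x|$ can be stationary (Van der Corput via $|\phi_\pm''|\approx|t|2^{\beta k}$) or not (one integration by parts against $|\phi_\pm'|\gtrsim|t|$). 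The two routes are morally equivalent: the paper's substitution $\alpha^2=r(1-\cos\theta)$ and the subsequent $\alpha$-Van der Corput is precisely a hands-on derivation of the $|x|^{-1/2}$ decay that the Hankel asymptotic packages up. What you gain is brevity and modularity — the $|t|\lessgtr|x|$ trichotomy collapses into a cleaner $|x|\lessgtr 1$ / stationary vs.\ non-stationary split, and the $|x|^{-1/2}$ gain and the $2^{-\beta k/2}$ gain are produced by two independent, transparent mechanisms (Bessel decay and $\gamma''$-degeneracy in VdC) rather than being interleaved. What the paper gains is self-containment: no Bessel asymptotics are invoked, only elementary integration by parts and the scalar Van der Corput lemma. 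Your bookkeeping of the three scales $2^k$, $|x|^{-1/2}$, $|t|2^{\beta k}$ checks out, in particular the $2^{-k}\cdot 2^k$ cancellation in $\|a_\pm'\|_{L^1}$, so no gap remains.
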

\begin{proof}
  We have $\chi_k(D)=\chi_k^+(D)+\chi_k^-(D)$ where $\chi_k^{\pm}(D)$
  are Fourier multipliers with symbols $P_{k-2\leq \cdot \leq
    k+2}^{\pm}(\xi).$ We will only establish
  (\ref{dispersiveestimate}) for $\|e^{ith(D)}(\chi_k^+(D))^2
  u_0\|_{L^\infty_{x}}.$ The other part $\|e^{ith(D)}(\chi_k^-(D))^2
  u_0\|_{L^\infty_{x}}$ can be estimated in the same way and the
  triangle inequality will give the estimate
  (\ref{dispersiveestimate}), as $\chi_k^+ (D) \chi_k^-(D)=0.$ For
  simplicity of notation we will still write $\chi_k(D),$ omitting the
  $+$ sign.  We first observe that
  $$e^{ith(D)}\chi_k^2(D)u_0= K*u_0$$
  where
  $$K(t,x)=\widecheck{e^{ith(\xi)}\chi^2_k(\xi)}=\int_{\R^2}e^{ix\cdot \xi}e^{ith(\xi)} \chi_k^2(\xi)\frac{d\xi}{(2\pi)^2}.$$
  Therefore by Young's inequality if suffices to show that
  $$\|K(t,x)\|_{L^\infty_x}\lesssim \frac{2^{-\beta k/2}}{|t|}.$$
  Since
  $$\nabla_\xi (th(\xi)+x\cdot \xi)=t\gamma'(|\xi|)\frac{\xi}{|\xi|}+x,$$
  by our assumption (b) on $\gamma$ in Section \ref{introsection},
  there exists a small admissible constant $c>0$ such that
  \begin{equation}\label{50}
    \left| \nabla_\xi (th(\xi)+x\cdot \xi) \right| \geq c|x|
  \end{equation}
  for every $|t|\leq c|x|$ and $|\xi|\in [1-2^{M+3},1+2^{M+3}],$
  and
  \begin{equation}\label{1000}
    \left| \nabla_\xi (th(\xi)+x\cdot \xi) \right| \geq c|t|
  \end{equation}
  for every $|t|> c^{-1}|x|$ and $|\xi|\in [1-2^{M+3},1+2^{M+3}].$

  Write $x=(x_1,x_2),$ and $\xi=(\xi_1,\xi_2).$ We consider three cases.

  \medbreak
  \noindent
  {\bf Case 1. $0<|t|\leq c|x|.$ } We integrate by parts and obtain
  \begin{align*}
    K(t,x)& =
    \int_{\R^2}e^{ix\cdot \xi}e^{ith(\xi)} \chi_k^2(\xi)\frac{d\xi}{(2\pi)^2} \\ &
    =-i\int_{\R^2}\frac{\left(t\gamma'(|\xi|)\frac{\xi}{|\xi|}+x\right) \cdot \nabla_\xi e^{i(x\cdot \xi+th(\xi))}}{\left|t\gamma'(|\xi|)\frac{\xi}{|\xi|}+x\right|^2} \chi_k^2(\xi)\frac{d\xi}{(2\pi)^2} \\
    & =i\int_{\R^2}e^{i(x\cdot \xi+th(\xi))}\nabla_\xi \cdot \left(\frac{\left(t\gamma'(|\xi|)\frac{\xi}{|\xi|}+x\right)}{\left|t\gamma'(|\xi|)\frac{\xi}{|\xi|}+x\right|^2}  \chi_k^2(\xi) \right) \frac{d\xi}{(2\pi)^2}.
  \end{align*}
  For every $0<|t|\leq c|x|$ and $\xi \in \supp \chi_k,$ by (\ref{50}) we have for $j=1,2$
  $$\left| \partial_{\xi_j}
  \frac{\left(t\gamma'(|\xi|)\frac{\xi_j}{|\xi|}+x_j\right)}{\left|t\gamma'(|\xi|)\frac{\xi}{|\xi|}+x\right|^2}
  \right| \lesssim \frac{|t|+|x|}{|x|^2}+\frac{|t|^2+|x|^2}{|x|^3} \lesssim \frac{1}{|t|}.$$
  Therefore for $0<|t|\leq c|x|,$
  \begin{equation}\label{1001}
    \left| \int_{\R^2}e^{i(x\cdot \xi+th(\xi))}\left( \nabla_\xi \cdot \frac{\left(t\gamma'(|\xi|)\frac{\xi}{|\xi|}+x\right)}{\left|t\gamma'(|\xi|)\frac{\xi}{|\xi|}+x\right|^2} \right) \chi_k^2(\xi)  \frac{d\xi}{(2\pi)^2} \right| \lesssim \frac{1}{|t|}.
  \end{equation}
  By definition
  $$\chi_k(\xi)=\chi\left(2^{-k-2}(|\xi|-1))\right)-
  \chi\left(2^{-k+3}(|\xi|-1))\right).$$
  If we abuse our notation by writing $\chi_k(r)=\chi_k(|\xi|)$ for $|\xi|=r$ (note that $\chi_k$ is radial),
  then we have
  \begin{equation}\label{2000}
    \int \left| \chi_k'(r)\right|dr \lesssim 1,
  \end{equation}
  which implies that
  $$ \int \left| \nabla_\xi \chi_k^2(\xi) \right| d\xi \lesssim 1.$$
  Hence again by (\ref{50}) we conclude that for $0<|t|\leq c|x|,$
  \begin{equation}\label{1002}
    \left| \int_{\R^2}e^{i(x\cdot \xi+th(\xi))} \frac{\left(t\gamma'(|\xi|)\frac{\xi}{|\xi|}+x\right)}{\left|t\gamma'(|\xi|)\frac{\xi}{|\xi|}+x\right|^2} \cdot \left( \nabla_\xi \chi_k^2(\xi) \right)  \frac{d\xi}{(2\pi)^2} \right| \lesssim \frac{1}{|t|}.
  \end{equation}
  Combining (\ref{1001}) and (\ref{1002}) we obtain
  $$|K(t,x)|\lesssim \frac{1}{|t|}$$
  for every $0<|t|\leq c|x|.$

  \medbreak
  \noindent
  {\bf Case 2. $|t|> c^{-1}|x|.$ } We repeat the argument in Case 1
  with slight modifications and use (\ref{1000}) in the place of
  (\ref{50}) to obtain
  $$|K(t,x)|\lesssim \frac{1}{|t|}$$
  for every $|t|>c^{-1}|x|.$

  \medbreak
  \noindent
  {\bf Case 3. $c|x|<|t|\leq c^{-1}|x|.$ }
  We write the integral $K(t,x)$ in polar coordinate $\xi=re^{i\theta}$ as
  $$K(t,x)=\frac{1}{(2\pi)^2}\int_{0}^{2\pi} \int_{0}^{\infty} e^{i(r(x_1\cos \theta+x_2 \sin \theta)+t\gamma(r)) } \chi_k^2(r) r dr d\theta,$$
  where as before we have abused our notation by writing $\chi_k(r)$ for $\chi_k(re^{i\theta}).$
  Making a change of variable in $\theta,$ we have
  $$K(t,x)=\frac{1}{(2\pi)^2}\int_{0}^{2\pi} \int_{0}^{\infty} e^{i(r|x|\cos \theta+t\gamma(r)) } \chi_k^2(r) r dr d\theta.$$

  Let $\psi_1(\theta):\mathbb{S}^1=\R/2\pi \Z \rightarrow [0,1]$ be a smooth $2\pi-$periodic function on $\R$ such that
  $\psi_1|_{[-\pi,\pi]}=1$ on $[-\pi/4,\pi/4]$ and $\supp \psi_1|_{[-\pi,\pi]}\subset [-\pi/2,\pi/2],$ and let $\psi_2(\theta)$ be the function $\psi_1(\theta+\pi).$ Let $\psi_3=1-\psi_1-\psi_2.$

  We now write $K(t,r)$ as the sum $\left( I_1+I_2+I_3 \right)/(2\pi)^2$, where
  $$I_i=\int_{0}^{2\pi} \int_\R e^{i(r|x|\cos \theta+t\gamma(r)) } \chi_k^2(r)  \psi_i(\theta) r dr d\theta.$$
  We estimate the three integrals separately.

  \smallbreak
  \noindent
  {\bf Estimate of $|I_1|$ and $|I_2|.$ }
  We will only estimate $|I_1|$ since the same argument applies to $|I_2|.$
  Suppose $(t,x)=\lambda (t_0,x_0)$ with $\lambda=|(t,x)|.$ Note that $|t_0|\sim |x_0| \sim 1$ and $\lambda\sim |t|$ by our assumption $c|x|<|t|\leq c^{-1}|x|.$
  We write $I_1$ as
  $$\int_{-\pi}^{\pi} \int_\R e^{i\lambda(\tilde{r}|x_0|\cos \theta+t_0\gamma(\tilde{r})) } \chi_k^2(\tilde{r})  \psi_1(\theta) \tilde{r} d\tilde{r} d\theta.$$
  Due to the support properties of $\chi_k$ and $\psi_1,$ we can make the change of variable
  $$\begin{cases}
      r=\tilde{r} \\
      \theta=2\arcsin\left( \alpha \sqrt{(1/2r)} \right) \quad \left(\text{or } \alpha^2=r(1-\cos \theta) \right)
    \end{cases}$$
  in the integral and obtain
  \begin{multline*}
    |I_1|=\int_{-\sqrt{1+2^{M+3}}}^{\sqrt{1+2^{M+3}}} \int_{1-2^{M+3}}^{1+2^{M+3}} e^{-i\lambda \alpha^2}e^{i\lambda(r|x_0|+t_0\gamma(r))}\chi_k^2(r)r \\
    \psi_1\left(2\arcsin \left(\alpha\sqrt{1/(2r)}\right)\right)\frac{2}{\sqrt{2r-\alpha^2}}drd\alpha.
  \end{multline*}
  Therefore by the Van der Corput lemma (see e.g. \cite{stein1993harmonic}) applied to integration in $\alpha$ we have
  \begin{align*}
    |I_1| & \leq \frac{C}{\lambda^{1/2}}
  \int_{-\sqrt{1+2^{M+3}}}^{\sqrt{1+2^{M+3}}} I_{1,\alpha}d\alpha \lesssim \frac{C}{\lambda^{1/2}} \sup_{|\alpha|\leq \sqrt{1+2^{M+3}}}  I_{1,\alpha}
  \end{align*}
   where $C$ is a universal constant and
   $$I_{1,\alpha}=\left| \int_{1-2^{M+3}}^{1+2^{M+3}} e^{i\lambda(r|x_0|+t_0\gamma(r))}\chi_k^2(r)r  \partial_\alpha \left( \psi_1\left(2\arcsin \left(\alpha\sqrt{1/(2r)}\right)\right)\frac{1}{\sqrt{2r-\alpha^2}}\right)  dr \right|.$$
   To estimate $I_{1,\alpha},$ noting that $\left|\partial_r^2 (r|x_0|+t_0\gamma(r)) \right|\gtrsim 2^{\beta k}$ on $\supp \chi_k,$ we apply Van der Corput lemma again to the integration in $r$ and obtain
   $$I_{1,\alpha}\lesssim \frac{C2^{-\beta k/2}}{\lambda^{1/2}}
   \int_{1-2^{M+3}}^{1+2^{M+3}} \left| \partial_r \left( \chi_k^2(r)r  \partial_\alpha \left( \psi_1\left(2\arcsin \left(\alpha\sqrt{1/(2r)}\right)\right)\frac{1}{\sqrt{2r-\alpha^2}}\right)  \right) \right| dr .$$
   Recalling (\ref{2000}) and support properties of $\chi_k(r)$ and $\psi_1(\theta)$ we conclude
   $$\sup_{|\alpha|\leq \sqrt{1+2^{M+3}}}  I_{1,\alpha}\lesssim \frac{2^{-\beta k/2}}{\lambda^{1/2}},$$
   which implies
   $$|I_1|\lesssim \frac{2^{-\beta k/2}}{\lambda}\lesssim \frac{2^{-\beta k/2}}{|t|}.$$

  \smallbreak
  \noindent
  {\bf Estimate of $|I_3|.$ }
  Since in the support of $\psi_3(\theta)$ we have $|\sin \theta|\gtrsim 1,$ using integration by parts we obtain
  \begin{align*}
    |I_3| & = \left| \int_{0}^{2\pi} \int_\R
    \frac{\partial_\theta (e^{ir|x|\cos \theta})}{-ir |x|\sin \theta} e^{it\gamma(r)}\chi_k^2(r)  \psi_3(\theta) r dr d\theta \right| \\
     & =\left| \int_{0}^{2\pi} \int_\R
    e^{ir|x|\cos \theta} \partial_\theta \left(  e^{it\gamma(r)}
    \frac{\chi_k^2(r)  \psi_3(\theta) }{i |x|\sin \theta}
      \right) dr d\theta \right| \\
     & \leq \int_{0}^{2\pi} \int_\R \left|
    \partial_\theta \left(
    \frac{ \psi_3(\theta) }{ \sin \theta}
      \right) \right| \frac{\chi_k^2(r)}{|x|}dr d\theta \\
     & \lesssim \frac{1}{|x|}
  \end{align*}
  Therefore when $c|x|<t\leq c^{-1}|x|$ we conclude
  $$|I_3|\lesssim \frac{1}{|t|}.$$

  \smallbreak
  Hence we have
  $$\|K(t,x)\|_{L^\infty_x}\leq |I_1|+|I_2|+|I_3|\lesssim \frac{2^{-\beta k/2}}{|t|}$$
  and our proof is complete.
\end{proof}

\begin{proof}[Proof of Proposition \ref{linearestimate}]
  We first observe that by construction we have
  $$\chi_k(D)P_{k}(D)=P_{k}(D).$$
  Therefore it suffice to prove the estimate
  \begin{equation}\label{16}
    \left\|e^{ith(D)}\chi_k(D)u_0\right\|_{L^4_{t,x}}\lesssim 2^{-\beta k/8} \|u_0\|_{L^2_x}
  \end{equation}
  for every $u_0\in L^2_x.$
  By the standard $TT^*$ argument (see for example \cite{stein1993harmonic}, \cite{tao2006nonlinear}) applied to the operator $e^{ith(D)}\chi_k(D)$ we conclude that the estimate (\ref{16}) is equivalent to the following estimate
  $$\left\|\int e^{i(t-s)h(D)}\chi_k^2(D)F(s)ds \right\|_{L^{4}_{t,x}}\lesssim 2^{-\beta k/4}\|F\|_{L^{4/3}_{t,x}}.$$
  By duality it suffices to show
  \begin{equation}\label{30}
    \left| \int \int \left(\int e^{i(t-s)h(D)}\chi_k^2(D)F(s)ds \right) \overline{G(t,x)}dtdx \right| \lesssim 2^{-\beta k/4}\|F\|_{L^{4/3}_{t,x}}\|G\|_{L^{4/3}_{t,x}}.
  \end{equation}
  We can interpolate the trivial estimate
  $$\left\|e^{ith(D)}\chi_k^2(D) u_0\right\|_{L^2_x}\lesssim \|u_0\|_{L^2_x}$$
  with the dispersive estimate (\ref{dispersiveestimate}) to obtain
  \begin{equation}\label{18}
    \left\|e^{ith(D)}\chi_k^2(D) u_0\right\|_{L^{4}_x}\lesssim  \frac{2^{-\beta k/4}}{|t|^{1/2}} \|u_0\|_{L^{4/3}_x}.
  \end{equation}
  Therefore by H\"{o}lder's inequality and Plancherel' theorem we have
  \begin{align*}
    & \left| \int \int \left(\int e^{i(t-s)h(D)}\chi_k^2(D)F(s)ds \right) \overline{G(t,x)}dtdx \right|  \\ & \qquad \qquad \qquad \qquad \qquad \qquad \qquad
    =\left| \int \int \left(\int e^{i(t-s)h(D)}\chi_k^2(D)F(s)  \overline{G(t)}dx \right)dsdt \right| \\ & \qquad \qquad \qquad \qquad \qquad \qquad \qquad
    \lesssim \int \int \frac{2^{-\beta k /4}}{|t-s|^{1/2}} \| F(s)\|_{L^{4/3}_x}\|G(t)\|_{L^{4/3}_x}  dsdt.
  \end{align*}
  Since $\dfrac{1}{2}+\dfrac{3}{4}+\dfrac{3}{4}=1+1,$ by Hardy-Littlewood-Sobolev inequality we obtain the desired inequality (\ref{30}). (\ref{15}) follows from (\ref{-1}) and the triangle inequality.
\end{proof}


\begin{remark}
    Observe that the weighted restriction estimate
    \begin{equation}\label{g2}
      \left\|e^{ith(D)}\kappa^{1/8}P_{\leq M+3}u_0\right\|_{L^4_{t,x}}\lesssim \|P_{\leq M+3}u_0\|_{L^2_x},
    \end{equation}
    where $$\kappa(\xi)=\frac{\gamma'(|\xi|)\gamma''(|\xi|)}{|\xi|(1+(
      \gamma'(|\xi|) )^2 )^2}$$ is the Gaussian curvature of the
    surface $\tau=h(\xi)$, will imply the localized Strichartz
    estimates in Proposition~\ref{linearestimate}.  There are some
    prior results in this direction.  If our curve $\gamma$ is convex,
    then the weighted restriction theorem proved in
    \cite{shayya2007affine} immediately implies (\ref{-1}).  Other
    such weighted restriction theorems have been considered for some
    radial surfaces in $\R^3$ \cite{oberlin2004uniform}
    \cite{carbery2007restriction}.  See also
    \cite{kenig1991oscillatory} for some weighted restriction
    estimates in $\R^n.$ For more general hypersurfaces in $\R^n$, see
    \cite{sogge1985averages} \cite{cowling1990damping}. Conversely,
    for a radial surface in $\R^3$ given by a curve $\gamma$
    satisfying our assumptions, (\ref{g2}) is an easy consequence of
    (\ref{-1}) and the bilinear estimate (\ref{g1}) proved later. One
    may also prove (\ref{g2}) directly by modifying our proof for
    (\ref{-1}).

\end{remark}

By the atomic structure of $U^p$ spaces, and embedding (\ref{embeddingUpVp}), we obtain the following.
\begin{corollary}\label{linearestimateV2}
  For every $k\leq M+1,$ we have
  $$\|P_{k\leq \cdot \leq M+1} u\|_{L^4_{t,x}}\lesssim 2^{-k \beta/8} \|P_{k\leq \cdot \leq M+1} u\|_{U^4_{h(D)}},$$
  and
  $$\|P_{k\leq \cdot \leq M+1} u\|_{L^4_{t,x}}\lesssim 2^{-k \beta/8} \|P_{k\leq \cdot \leq M+1} u\|_{V^2_{h(D)}}.$$
\end{corollary}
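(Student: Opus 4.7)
The plan is to reduce the corollary to the localized Strichartz estimate (\ref{15}) via the atomic structure of $U^4_{h(D)}$, and then obtain the $V^2$ statement from the $U^4$ statement by invoking the embedding in (\ref{embeddingUpVp}).

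First I would handle a single $U^4_{h(D)}$-atom. By definition, any such atom $b$ has the form $b(t) = \sum_{i=1}^{K} 1_{[t_{i-1},t_i)}(t)\, e^{ith(D)}\phi_{i-1}$ with $\sum_i \|\phi_{i-1}\|_{L^2_x}^4 = 1$. The frequency projection $P_{k\le\cdot\le M+1}$ commutes with $e^{ith(D)}$, and the time cutoffs $1_{[t_{i-1},t_i)}$ are pairwise disjoint, so
\begin{equation*}
\|P_{k\le\cdot\le M+1}\, b\|_{L^4_{t,x}}^4 = \sum_{i=1}^{K}\bigl\|1_{[t_{i-1},t_i)}\, e^{ith(D)} P_{k\le\cdot\le M+1}\phi_{i-1}\bigr\|_{L^4_{t,x}}^4.
\end{equation*}
Dropping the time cutoff and applying (\ref{15}) term by term gives $2^{-k\beta/2}\sum_i \|\phi_{i-1}\|_{L^2_x}^4 = 2^{-k\beta/2}$, hence $\|P_{k\le\cdot\le M+1}\,b\|_{L^4_{t,x}} \lesssim 2^{-k\beta/8}$.

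Next I would promote this to a general $u \in U^4_{h(D)}$ by atomic decomposition. Writing $u = \sum_j \lambda_j b_j$ with $U^4_{h(D)}$-atoms $b_j$, the triangle inequality in $L^4_{t,x}$ combined with the atom bound yields $\|P_{k\le\cdot\le M+1}\, u\|_{L^4_{t,x}} \lesssim 2^{-k\beta/8}\sum_j |\lambda_j|$. Taking the infimum over admissible decompositions produces the first inequality.

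For the second inequality, I would apply the embedding $V^2_{rc} \hookrightarrow U^4$ from (\ref{embeddingUpVp}), transferred through $e^{ith(D)}$: this gives $\|v\|_{U^4_{h(D)}} \lesssim \|v\|_{V^2_{h(D),rc}} = \|v\|_{V^2_{h(D)}}$ for any $v \in V^2_{h(D),rc}$. Applying this to $v = P_{k\le\cdot\le M+1} u$ and chaining with the first inequality yields the second. There is no real obstacle here; the only point that needs care is the bookkeeping that $P_{k\le\cdot\le M+1}$ commutes with $e^{ith(D)}$ so the atomic structure survives the projection, and that the time-localized pieces in an atom have disjoint supports, which is exactly what makes $L^4_{t,x}$ behave additively to the fourth power.
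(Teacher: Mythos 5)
Your proof is correct and follows exactly the route the paper has in mind: bound a single $U^4_{h(D)}$-atom via the disjointness of the time pieces and the localized Strichartz estimate (\ref{15}), pass to general $u$ by the triangle inequality over the atomic decomposition, and then deduce the $V^2_{h(D)}$ bound from the embedding $V^2_{rc} \hookrightarrow U^4$ conjugated by $e^{ith(D)}$. The only point worth flagging is that the $V^2$ estimate, as you correctly use it, really requires $P_{k\leq\cdot\leq M+1}u \in V^2_{h(D),rc}$ (the embedding in (\ref{embeddingUpVp}) goes through $V^p_{rc}$, not all of $V^p$); the paper's statement is slightly loose on this, but all later applications are to $Y^0$-type functions where this right-continuity is built in.
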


\bigskip

The second part of the section is devoted to bilinear $L^2$ estimates,
which exploit transversality in the bilinea interaction of waves,
rather than dispersion. As a starting point for our analysis, we
utilize the following well-known bilinear estimate. The proof given
here is adapted from \cite{bourgain1998refinements}.




\begin{proposition}\label{bilinearestimateoriginal}
  Suppose $h(\xi),h'(\xi)$ are smooth functions on some open subset $\Omega$ of $\R^2$ and $\Omega'\Subset \Omega.$ Let $\Omega_1,\Omega_2$ be two open subset of $\Omega'.$ Suppose
  $$\theta :=\inf_{\xi\in \Omega_1, \eta\in \Omega_2} |\nabla h(\xi)-\nabla h'(\eta)|>0$$
  and
  $$l=\sup_{(\tau_0, \xi_0)\in \R\times \R^2} \mathrm{meas}_1\{(h(\xi),\xi): \xi \in \Omega_1, \xi_0-\xi\in \Omega_2, \, \tau_0-h(\xi)=h'(\xi_0-\xi)  \}<\infty.\footnote{The previous condition gurantees that the set being measured, which is the intersection of the graph of $h$ over $\Omega_1$ and an inverse translate of the graph of $h'$ over $\Omega_2,$ is a 1-dimensional submanifold in $\R^3.$ So here the 1-dimensional Haussdorff measure $\mathrm{meas}_1$ just means the length of the curve.}$$
  Then we have
  $$\|(e^{ith(D)}u)(e^{ith'(D)}v)\|_{L^2_{t,x}}\leq \theta^{-1/2}l^{1/2}\|u\|_{L^2_x}\|v\|_{L^2_x}$$
  for every $u,v\in L^2_x$ with $\supp \hat{u}\subset \Omega_1, \supp \hat{v}\subset \Omega_2.$
\end{proposition}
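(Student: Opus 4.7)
The plan is to pass to the spacetime Fourier side, where $F(t,x) := (e^{ith(D)}u)(e^{ith'(D)}v)$ becomes a distribution supported on the resonance variety $\{\tau = h(\xi) + h'(\xi_0-\xi)\}$, and then to run a weighted Cauchy--Schwarz on the resulting line integral, extracting the factor $\theta^{-1/2}$ from transversality and $l^{1/2}$ from the length hypothesis.

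Concretely, Plancherel in $(t,x)$ identifies $\|F\|_{L^2_{t,x}}^2$ with (up to constants) $\int |\widehat F(\tau,\xi_0)|^2\, d\tau\, d\xi_0$, where
$$\widehat F(\tau,\xi_0) = \int \widehat u(\xi)\,\widehat v(\xi_0-\xi)\,\delta\!\bigl(\tau-\Phi(\xi;\xi_0)\bigr)\, d\xi, \qquad \Phi(\xi;\xi_0) := h(\xi) + h'(\xi_0-\xi).$$
The transversality hypothesis reads $|\nabla_\xi \Phi| = |\nabla h(\xi) - \nabla h'(\xi_0-\xi)| \geq \theta$, so the level set $S_{\tau,\xi_0} := \{\xi \in \Omega_1 : \xi_0-\xi\in\Omega_2,\ \Phi(\xi;\xi_0)=\tau\}$ is a smooth $1$-manifold, and the coarea formula gives the line-integral representation
$$\widehat F(\tau,\xi_0) = \int_{S_{\tau,\xi_0}} \frac{\widehat u(\xi)\,\widehat v(\xi_0-\xi)}{|\nabla_\xi \Phi(\xi;\xi_0)|}\, d\sigma(\xi).$$
A quick check (the projection $\R^3 \to \R^2$ that drops the $h$-coordinate is a contraction) shows $\mathrm{meas}_1(S_{\tau,\xi_0}) \leq l$.

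Now apply Cauchy--Schwarz on $S_{\tau,\xi_0}$ with the weight $d\sigma/|\nabla_\xi\Phi|$:
$$|\widehat F(\tau,\xi_0)|^2 \leq \left(\int_{S_{\tau,\xi_0}} \frac{d\sigma}{|\nabla_\xi\Phi|}\right)\left(\int_{S_{\tau,\xi_0}} |\widehat u(\xi)\widehat v(\xi_0-\xi)|^2\, \frac{d\sigma}{|\nabla_\xi\Phi|}\right).$$
The first factor is bounded by $\theta^{-1}\mathrm{meas}_1(S_{\tau,\xi_0}) \leq \theta^{-1}l$. For the second factor I would integrate in $(\tau,\xi_0)$ and reverse the coarea formula: the $\tau$-integration absorbs the implicit delta to yield $\iint |\widehat u(\xi)|^2|\widehat v(\xi_0-\xi)|^2\, d\xi\, d\xi_0 = \|u\|_{L^2}^2\|v\|_{L^2}^2$. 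Combining gives $\|F\|_{L^2_{t,x}}^2 \leq \theta^{-1} l\, \|u\|_{L^2}^2\|v\|_{L^2}^2$, which is the claim.

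The only delicacy is giving the formal $\delta$-function manipulations a rigorous meaning. I would handle this by first assuming $\widehat u, \widehat v$ are smooth and compactly supported inside $\Omega_1, \Omega_2$, in which case the coarea representation is pointwise legitimate; the general $L^2$ case with the stated support conditions then follows by density, since both sides of the inequality are bilinear and continuous in $(u,v)$. Note that no dispersive properties of $h$ or $h'$ beyond the transversality and length hypotheses are used.
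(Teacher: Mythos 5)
Your proof is correct and follows essentially the same route as the paper: both pass to the spacetime Fourier side, write $\widehat{F}(\tau,\xi_0)$ as an integral against the resonance delta (equivalently, via coarea, as a weighted line integral over the level set), and run Cauchy--Schwarz with the weight $d\sigma/|\nabla_\xi\Phi|$ to split off $\theta^{-1}l$ from one factor and $\|u\|_{L^2}^2\|v\|_{L^2}^2$ from the other. Your explicit note that the coordinate projection $\R^3\to\R^2$ is a contraction (hence $\mathrm{meas}_1(S_{\tau,\xi_0})\le l$) and the closing density remark are slight elaborations on steps the paper treats implicitly, but the argument is the same.
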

\begin{proof}
  By Plancherel's theorem and the Cauchy-Schwartz inequality (applied to the measure $\delta_0(\tau -h'(\xi-\xi')-h(\xi'))d\xi'$), we have
  \begin{align*}
    & \|(e^{ith(D)}u)(e^{ith'(D)}v)\|_{L^2_{t,x}}^2 =\|\int \delta_0(\tau -h'(\xi-\xi')-h(\xi'))\hat{u}(\xi-\xi')\hat{v}(\xi')d\xi' \|_{L^2_{\tau,\xi}}^2 \\
     & \leq  \left( \int\int\int |\hat{u}(\xi-\xi')|^2|\hat{v}(\xi')|^2 \delta_0(\tau -h'(\xi-\xi')-h(\xi')) d\xi' d\xi d\tau \right) \\
     & \qquad   \times \sup_{\tau, \xi} \left(\int 1_{\Omega_1}(\xi')1_{\Omega_2}(\xi-\xi') \delta_0(\tau -h'(\xi-\xi')-h(\xi')) d\xi'\right) \\
     & \leq \|u\|^2_{L^2_x} \|v\|_{L^2_x}^2 \sup_{\tau, \xi} \left(\int_{h'(\xi-\xi')+h(\xi')=\tau}1_{\Omega_1}(\xi')1_{\Omega_2}(\xi-\xi') \frac{d\mu(\xi')}{|\nabla h'(\xi-\xi')-\nabla h(\xi')|} \right).
  \end{align*}
  By our assumption we have
  $$
    \sup_{\tau, \xi} \left(\int_{h'(\xi-\xi')+h(\xi')=\tau}1_{\Omega_1}(\xi')1_{\Omega_2}(\xi-\xi') \frac{d\mu(\xi')}{|\nabla h'(\xi-\xi')-\nabla h(\xi')|} \right)
    \leq \theta^{-1}l.
  $$
  Therefore we obtain the claimed estimate.
\end{proof}

%

We now apply the above general bilinear estimate to our problem. For simplicity of exposition we assume without loss of generality that if $k_1\leq k_2-10,$ then $|\gamma''(r_1)|\leq |\gamma''(r_2)|/3$ whenever $|r_1-1|\leq 2^{k_1+2}$ and $2^{k_2-2}\leq |r_2-1|\leq 2^{M+3}.$ In general we choose a sufficiently large admissible integer in the place of $10.$

\begin{proposition}\label{bilinearestimatefreewave}
  Suppose $k_1\leq k_2-10$ and $k_2\leq M+1.$ Then we have
  \begin{equation}\label{g1}
    \left\|(e^{ith(D)}P_{\leq k_1}u) (e^{ith(D)}P_{k_2}v)\right\|_{L^2_tL^2_x}
    \lesssim 2^{-\beta k_2/4}2^{(k_1-k_2)/4} \left\|P_{\leq k_1}u\|_{L^2_x}\|P_{k_2}v\right\|_{L^2_x}.
  \end{equation}
  As a consequence we have
  $$
    \left\|(e^{ith(D)}P_{\leq k_1}u) (e^{ith(D)}P_{k_2\leq \cdot \leq M+1}v)\right\|_{L^2_tL^2_x}
    \lesssim 2^{-\beta k_2/4}2^{(k_1-k_2)/4} \left\|P_{\leq k_1}u\|_{L^2_x}\|P_{k_2\leq \cdot \leq M+1}v\right\|_{L^2_x}.
  $$
\end{proposition}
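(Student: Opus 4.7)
The plan is to apply Proposition~\ref{bilinearestimateoriginal} after an angular decomposition of both factors, chosen so that angular separation dominates the transversality. Writing $\nabla h(\xi)=\gamma'(|\xi|)\hat\xi$ and letting $\theta_0$ denote the angle between $\hat\xi$ and $\hat\eta$, one has
\[
|\nabla h(\xi)-\nabla h(\eta)|^2=(\gamma'(|\xi|)-\gamma'(|\eta|))^2+2\gamma'(|\xi|)\gamma'(|\eta|)(1-\cos\theta_0),
\]
so that $|\nabla h(\xi)-\nabla h(\eta)|\gtrsim\max(|\gamma'(|\xi|)-\gamma'(|\eta|)|,\theta_0)$. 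The assumption $|\gamma''(r)|\approx|r-1|^\beta$ gives $|\gamma'(|\xi|)-\gamma'(|\eta|)|\gtrsim 2^{(\beta+1)k_2}$ for $|\xi|\in[1-2^{k_1},1+2^{k_1}]$ and $|\eta|$ near $1+2^{k_2}$, so angular separations $\theta_0\gtrsim 2^{(\beta+1)k_2/2}$ win. This motivates choosing angular sectors of aperture $\alpha:=\max(2^{k_1},2^{(\beta+1)k_2/2})$.

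After separating the $\pm$ parts of $P_{k_2}$ (by symmetry one sign suffices), I would smoothly partition $P_{\leq k_1}u=\sum_a u^a$ and $P_{k_2}^+v=\sum_b v^b$ into angular sectors of aperture $\alpha$. For each pair $(a,b)$ at angular distance $\theta_0\in[\alpha 2^n,\alpha 2^{n+1})$, $n\geq 0$, applying Proposition~\ref{bilinearestimateoriginal} to the Fourier supports $\Omega_1^a,\Omega_2^b$ yields transversality $\theta\gtrsim\theta_0$. For the resonance curve $\{\gamma(|\xi|)+\gamma(|\xi_0-\xi|)=\tau_0\}\cap\Omega_1^a$, a short polar-coordinate computation for $F(\xi):=\gamma(|\xi|)+\gamma(|\xi_0-\xi|)$ gives $|\partial_\phi F|\approx\theta_0$ and $|\partial_r F|\lesssim\max(2^{(\beta+1)k_2},\theta_0^2)$, so for $\theta_0\gtrsim\alpha$ the tangent to the level curve is nearly radial and the curve crosses $\Omega_1^a$ across its radial width, yielding $l\lesssim 2^{k_1}$. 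Thus Proposition~\ref{bilinearestimateoriginal} produces the per-pair bound
\[
\|(e^{ith(D)}u^a)(e^{ith(D)}v^b)\|_{L^2_{t,x}}\lesssim\theta_0^{-1/2}\,2^{k_1/2}\,\|u^a\|_{L^2_x}\|v^b\|_{L^2_x}.
\]

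Finally, I would sum via a Whitney-type decomposition using almost orthogonality of the Fourier supports $\supp(\widehat{u^a}*\widehat{v^b})$: within each scale $n$ these supports have bounded overlap (distinguished by the angular mean $\vec a+\vec b$), and across scales they are separated by the magnitude of $\hat a+(1+2^{k_2})\hat b$, which depends monotonically on $|a-b|$. Combined with $\sum_{b:\,|a-b|\sim\alpha 2^n}\|v^b\|_{L^2}^2\leq\|v\|_{L^2}^2$ and the geometric sum $\sum_n 2^{-n}\lesssim 1$, this yields
\[
\|(e^{ith(D)}P_{\leq k_1}u)(e^{ith(D)}P_{k_2}v)\|_{L^2_{t,x}}\lesssim \alpha^{-1/2}\,2^{k_1/2}\,\|P_{\leq k_1}u\|_{L^2_x}\|P_{k_2}v\|_{L^2_x},
\]
which with $\alpha=\max(2^{k_1},2^{(\beta+1)k_2/2})$ is verified by a direct case check to dominate the target $2^{-\beta k_2/4}2^{(k_1-k_2)/4}$ in both of the regimes $2^{k_1}\gtrless 2^{(\beta+1)k_2/2}$. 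The two main obstacles will be the sharp polar-coordinate length bound $l\lesssim 2^{k_1}$ (requiring care at the critical scale $\theta_0\sim\alpha$ where $\partial_r F$ can vanish due to cancellation between the two terms) and the rigorous verification of the Whitney-scale almost orthogonality.
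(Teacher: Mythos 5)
Your proposal takes a genuinely different route from the paper (a single angular Whitney decomposition at aperture $\alpha=\max(2^{k_1},2^{(\beta+1)k_2/2})$ instead of the paper's multi-scale angular decomposition from a finest scale $L$ or $L'$ up to $O(1)$, combined with an auxiliary radial decomposition $R_k$ of the high piece), and the off-diagonal part of your argument is essentially correct: for $\theta_0\gtrsim\alpha$ one indeed has $|\partial_r F|/|\partial_\phi F|\lesssim 1$, so $l\lesssim 2^{k_1}$, and the geometric sum over Whitney scales reproduces the target, as your case check confirms. But there is a genuine gap at the \emph{diagonal} (pairs of sectors that are identical or adjacent), which your enumeration over $n\geq 0$ with $\theta_0\in[\alpha 2^n,\alpha 2^{n+1})$ silently omits. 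For those pairs the infimal angular separation is $0$, so the transversality comes only from the radial direction, $\theta\gtrsim 2^{(\beta+1)k_2}$, and the resonance curve can be purely tangential with $l\sim\alpha$ (the full angular width of the sector), not $2^{k_1}$. Proposition~\ref{bilinearestimateoriginal} then yields a per-pair bound $\theta^{-1/2}l^{1/2}\lesssim 2^{-(\beta+1)k_2/2}\alpha^{1/2}$, and a short computation shows this dominates the target if and only if $\log_2\alpha>\bigl((\beta+1)k_2+k_1\bigr)/2$; since your $\alpha=\max(2^{k_1},2^{(\beta+1)k_2/2})$ always exceeds $2^{((\beta+1)k_2+k_1)/2}$ (in Case A because $(\beta+1)k_2\le k_1\le 0$, and in Case B because $k_1<0$), the diagonal estimate exceeds the target in both cases.

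This is not a small fix: shrinking $\alpha$ to the value $2^{L'}$ with $L'=(k_1+(\beta+1)k_2)/2$ makes the diagonal match the target exactly, but then for $\theta_0$ in the range $[2^{L'},2^{(\beta+1)k_2/2}]$ the ratio $|\partial_r F|/|\partial_\phi F|\sim 2^{(\beta+1)k_2}/\theta_0$ exceeds $1$, the level curve is mostly tangential, and your bound $l\lesssim 2^{k_1}$ breaks down. This tension is precisely why the paper carries a whole ladder of scales $m$ with \emph{scale-dependent} length bounds ($l\lesssim 2^{k_1}$ at coarse $m$, $l\lesssim 2^{2m}/2^{(\beta+1)k_2}$ at fine $m$) and sums them. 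A second, smaller issue: the claimed bound $l\lesssim 2^{k_1}$ implicitly treats the curve as a single graph over $r$, but near points where $\partial_r F$ vanishes the curve can turn around, potentially multiplying the length; the paper controls this with Lemma~\ref{geometriclemma} (bounded number of components and total angular span $\lesssim 2^{(k_1-k_2)/2}$), and you should invoke an analogue. The radial decomposition $R_k$ you omit is used in the paper mainly for the almost-orthogonality sum (\ref{c1}) and can plausibly be replaced by your Whitney bookkeeping, but that step also needs to be carried out carefully since the output frequency's radial width is governed by $2^{k_2}$, not $\alpha^2$, at the coarse Whitney scales.
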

\begin{remark}
  Since taking complex conjugation preserves $L^2$ norms, we immediately deduce that the proposition still holds if both $e^{ith(D)}$ are replaced by $e^{-ith(D)}.$ The above proposition also holds if either $e^{ith(D)}$ on the left hand side is replaced by $e^{-ith(D)},$ in which case the following argument goes through with little modification.
\end{remark}

\begin{proof}
  The second estimate follows immediately from the first one by the triangle inequality. For (\ref{g1}) we will only estimate
  $ \left\|(e^{ith(D)}P_{\leq k_1}u) (e^{ith(D)}P_{k_2}^+v)\right\|_{L^2_tL^2_x}$
  since other combinations can be similarly estimated, and an application of the triangle inequality will give the desired result. As before we will omit $+$ in the notation. For simplicity of notation we will also abuse our notation by writing $u$ in place of $P_{\leq k_1}u$ and $v$ in place of $P_{k_2} v.$ $\hat{u}$ and $\hat{v}$ are supported on $\Omega_1$ and $\Omega_2$ respectively where
  \begin{align}\label{Omega}
    & \Omega_1=\{\xi\in \R^2:1-2^{k_1+2}< |\xi|< 1+2^{k_1+2}\} \\
    & \Omega_2=\{\xi\in \R^2:1+2^{k_2-2}< |\xi|< 1+2^{k_2+2}\}. \nonumber
  \end{align}

  We let $R_k$ be the Fourier multiplier given by the symbol $1_{k2^{k_1}\leq |\xi|-1< (k+1)2^{k_1}}(\xi).$ Then we can write
  $$v=\sum_{k=2^{k_2-k_1-3}}^{2^{k_2-k_1+3}} R_k v.$$
  We let $\Omega^k$ be the annulus $(k-1/3)2^{k_1}< |\xi|-1< (k+4/3)2^{k_1}.$
  By Plancherel's theorem if
  for every $\xi,\xi' \in \Omega_1$ and $\eta\in \Omega^{k},\eta' \in \Omega^{k'}$ we have $h(\xi)+h(\eta)\neq h(\xi')+h(\eta')$
  then
  $\langle (e^{ith(D)}u)(e^{ith(D)}R_kv), (e^{ith(D)}u)(e^{ith(D)}R_{k'}v) \rangle =0.$
  By our assumption (b) on $\gamma$ we conclude the following almost orthogonality relation
  \begin{equation}\label{c1}
    \|(e^{ith(D)}u)(e^{ith(D)}v)\|_{L^2}^2\lesssim \sum_k \|(e^{ith(D)}u)(e^{ith(D)}R_k v)\|_{L^2}^2.
  \end{equation}
  Now we fix $k\in [2^{k_2-k_1-3},2^{k_2-k_1+3}]\cap \Z.$ To estimate terms on the right hand side of (\ref{c1}), we consider the following two cases.

  \medbreak
  \noindent
  {\bf Case 1\, $k_1\geq (\beta+1)k_2 .\quad$}

  We let $L=(\beta+1)k_2.$
  For $L\leq m\leq -2$ we let $Q_{m,j}$ be the Fourier multiplier given by the symbol
  $$1_{j2^{m}2\pi\leq \xi/|\xi| <(j+1)2^{m}2\pi}(\xi),$$
  where we used the identification $\mathbb{S}^1\cong \R/2\pi \Z.$ Put another way, $Q_{m,j}(\xi)=1$ if and only if $\xi=|\xi|e^{i\theta}$ for some $\theta\in [j2^{m}2\pi,(j+1)2^{m}2\pi).$
  Then by the triangle inequality we have
  \begin{multline}\label{a2}
    \left\|(e^{ith(D)}u) (e^{ith(D)}R_k v)\right\|_{L^2} \\
    \leq
  \sum_{m=L}^{-2} \sum_{j=0}^{2^{-L}-1} \sum_{j':1<|j'-j|<\min \{6,2^{-m}-1 \}} \|(e^{ith(D)}Q_{m,j}u) (e^{ith(D)}Q_{m,j'}R_k v)\|_{L^2} \\
  +\sum_{j=0}^{2^{-L}-1} \sum_{j':|j'-j|\leq 1} \|(e^{ith(D)}Q_{m,j}u) (e^{ith(D)}Q_{m,j'}R_k v)\|_{L^2}
  \end{multline}
  We define
  $$U_{m,j}:=\left\{\xi\in \Omega_{1}: (j-1/3)2^{m}2\pi< \xi/|\xi| <(j+4/3)2^{m}2\pi \right\}$$
  $$V_{m,j'}:=\left\{\xi \in \Omega^{k}: (j'-1/3)2^{m}2\pi< \xi/|\xi| <(j'+4/3)2^{m}2\pi\right\}.$$
  Then $U_{m,j},V_{m,j'}$ contain the support of $Q_{m,j}(\xi)\hat{u}(\xi)$ and $Q_{m,j'}(\xi)R_k(\xi)\hat{v}(\xi)$ respectively.

  Note that when $\xi \in \Omega_{1}$ and $\xi' \in \Omega^k$ we have
  \begin{align*}
    |\nabla h(\xi)-\nabla h(\xi')| & =\left|\gamma'(|\xi|)\frac{\xi}{|\xi|}-\gamma'(|\xi'|)\frac{\xi'}{|\xi'|}\right| \\
     & = \left|\gamma'(|\xi|)\left( \frac{\xi}{|\xi|}-\frac{\xi'}{|\xi'|} \right)+ \left(\gamma'(|\xi|)-\gamma'(|\xi'|)\right)\frac{\xi'}{|\xi'|} \right| \\
     & \gtrsim \left|\frac{\xi}{|\xi|}-\frac{\xi'}{|\xi'|}\right| +2^{(\beta+1)k_2}
  \end{align*}
  because $\gamma'(|\xi|),\gamma'(|\xi'|) \sim 1$ and
  $$|\gamma'(|\xi|)-\gamma'(|\xi'|)|=\left|\int_{|\xi'|}^{|\xi|}\gamma''(r)dr\right|\sim 2^{(\beta+1)k_2}.$$
  So if we let $\theta$ be the parameter in Proposition \ref{bilinearestimateoriginal} for $U_{m,j}$ and $V_{m,j'}$
  then when $1<|j-j'|<6$ and $L\leq m\leq -4,$
  $$\theta \gtrsim \left|\frac{\xi}{|\xi|}-\frac{\xi'}{|\xi'|}\right| \gtrsim 2^{m}$$
  and when $|j'-j|\leq 1$ and $m=L,$
  $$\theta \gtrsim 2^{(\beta+1)k_2}= 2^L.$$

  We let $l$ be the parameter in Proposition \ref{bilinearestimateoriginal} for $U_{m,j}$ and $V_{m,j'}.$ Since $h$ has bounded $C^3-$norm on the annulus $\left||\xi|-1\right|\leq 2^{M+3},$ we have
  $$l\sim \sup_{\tau,\eta} \mathrm{meas}_1\, \{\xi\in U_{m,j}: \eta-\xi\in V_{m,j'}, h(\eta-\xi)+h(\xi)=\tau\}.$$
  We let $\gamma_{\tau,\eta}$ be the curve
  $$\{\xi\in U_{m,j}: \eta-\xi\in V_{m,j'}, h(\eta-\xi)+h(\xi)=\tau\}.$$
  Fix $\eta\in \R^2$ and $\tau\in \R.$
  To estimate the length of $\gamma_{\tau,\eta}$ we examine its defining function
  $$f(\xi):=h(\eta-\xi)+h(\xi).$$
  Then
  \begin{align}\label{a1}
     \nabla f(\xi) & =\gamma'(|\xi|)\frac{\xi}{|\xi|}- \gamma'(|\eta-\xi|)\frac{\eta-\xi}{|\eta-\xi|} \\
     & =(\gamma'(|\xi|)-\gamma'(|\eta-\xi|))
  \frac{\xi}{|\xi|}+\gamma'(|\eta-\xi|)\left(\frac{\xi}{|\xi|}-\frac{\eta-\xi}{|\eta-\xi|}\right).
  \nonumber
  \end{align}
  We write $\nabla f=\nabla_r f+\nabla_\theta f$ where
  $$\nabla_r f(\xi)=\left(\nabla f \cdot \frac{\xi}{|\xi|} \right) \xi, \qquad \nabla_\theta f=\nabla f-\nabla_r f$$
  are the radial and tangential parts of $\nabla f$ respectively. Note that when $\xi \in U_{m,j}$ and $\eta-\xi \in V_{m,j'},$
  $$-\gamma'(|\xi|)+\gamma'(|\eta-\xi|)\sim 2^{(\beta+1)k_2},\quad \gamma'(|\eta-\xi|)\sim 1$$
  So by (\ref{a1}) and our assumption $k_1\geq (\beta+1)k_2 $ we know that when $1<|j-j'|<6$
  $$
  \frac{|\nabla_r f(\xi)|}{|\nabla_\theta f(\xi)|}\begin{cases}
              \lesssim 1 & \mbox{if } k_1/2+(\beta+1)k_2/2<m\leq -4, \\
              \gtrsim \dfrac{2^{k_2(\beta
              +1)}}{2^{m}} & \mbox{if } L\leq m\leq k_1/2+(\beta+1)k_2/2.
            \end{cases}
  $$
  for every $\xi \in U_{m,j}$ satisfying $\eta -\xi \in V_{m,j'}.$
  So if we let $l$ be the parameter in Proposition \ref{bilinearestimateoriginal} for $U_{m,j}$ and $V_{m,j'},$ then\footnote{To be completely rigorous we need to show that the number of components of $\gamma_{\tau,\xi}$ is bounded by some admissible constant. See the proof of Lemma \ref{geometriclemma} below.}
  when $1<|j-j'|<6,$
  $$l \lesssim
  \begin{cases}
    2^{k_1} & \mbox{if } k_1/2+(\beta+1)k_2/2<m\leq -4, \\
    \dfrac{2^{m}}{2^{k_2(\beta+1)}}2^{m} & \mbox{if }L\leq m\leq k_1/2+(\beta+1)k_2/2
  \end{cases}
  $$
  and when $|j'-j|\leq 1$ and $m=L=(\beta+1)k_2,$
  $$l \lesssim 2^{L}= 2^{(\beta+1)k_2}.$$
  Applying Proposition \ref{bilinearestimateoriginal} we obtain
  when $1<|j-j'|<6,$
  \begin{multline*}
    \left\|(e^{ith(D)}Q_{m,j}u) (e^{ith(D)}Q_{m,j'} R_k v)\right\|_{L^2} \\ \lesssim
    \begin{cases}
    2^{k_1/2}2^{-m/2}\|Q_{m,j} u\|_{L^2}\|Q_{m,j'}R_k v\|_{L^2}& \mbox{if } k_1/2+(\beta+1)k_2/2<m\leq -4, \\
    2^{m-k_2(\beta+1)/2}2^{-m/2}\|Q_{m,j} u\|_{L^2}\|Q_{m,j'}R_k v\|_{L^2} & \mbox{if }L\leq m\leq k_1/2+(\beta+1)k_2/2
    \end{cases}
    \\ =
    \begin{cases}
    2^{k_1/2}2^{-m/2}\|Q_{m,j}u\|_{L^2}\|Q_{m,j'}R_k v\|_{L^2} & \mbox{if } k_1/2+(\beta+1)k_2/2<m\leq -4, \\
     2^{m/2}2^{-k_2(\beta+1)/2}\|Q_{m,j}u\|_{L^2}\|Q_{m,j'}R_k v\|_{L^2} & \mbox{if }L\leq m\leq k_1/2+(\beta+1)k_2/2
  \end{cases}
  \end{multline*}
  and when $|j'-j|\leq 1$ and $m=L,$
  \begin{align*}
    \left\|(e^{ith(D)}Q_{m,j}u) (e^{ith(D)}Q_{m,j'}R_k v)\right\|_{L^2} & \lesssim
  2^{L/2}2^{-L/2}\|Q_{m,j} u\|_{L^2}\|Q_{m,j'}R_k v\|_{L^2} \\
     & \lesssim \|Q_{m,j}u\|_{L^2}\|Q_{m,j'}R_k v\|_{L^2} \\
     & \lesssim 2^{-\beta k_2/4} 2^{(k_1-k_2)/4}\|Q_{m,j} u\|_{L^2}\|Q_{m,j'}R_k v\|_{L^2}
  \end{align*}
  where we have used our assumption $k_1\geq (\beta+1)k_2 .$

  When $1<|j-j'|<\min \{6,2^{-m}-1 \}$ and $-3\leq m\leq -2$ we can easily see that $\theta \gtrsim 1.$ By Lemma \ref{geometriclemma} below we have $l\lesssim 2^{(k_1-k_2)/2}.$ So Proposition \ref{bilinearestimateoriginal} gives
  \begin{equation}\label{d1}
    \left\|(e^{ith(D)}Q_{m,j}u) (e^{ith(D)}Q_{m,j'}R_k v)\right\|_{L^2}\lesssim 2^{(k_1-k_2)/4}\|Q_{m,j} u\|_{L^2}\|Q_{m,j'}R_k v\|_{L^2}.
  \end{equation}

  Therefore combining the above estimates, (\ref{a2}), and the Cauchy-Schwartz inequality yields
  \begin{align*}
   & \left\|(e^{ith(D)}u) (e^{ith(D)} R_k v)\right\|_{L^2} \\
     & \lesssim \left(\sum_{k_1/2+(\beta+1)k_2/2<m\leq -4} 2^{k_1/2}2^{-m/2} +\sum_{L\leq m\leq k_1/2+(\beta+1)k_2/2}2^{m/2}2^{-k_2(\beta+1)/2} \right.\\
     & \qquad  \left.+ 2^{(k_1-k_2)/4}+
     2^{-\beta k_2/4} 2^{(k_1-k_2)/4} \right) \sup_m \left( \sum_j \|Q_{m,j}u\|_{L^2}^2 \right)^{1/2}\left( \sum_{j'} \|Q_{m,j'}R_k v\|_{L^2}^2\right)^{1/2} \\
     & \lesssim \left(2^{k_1/2}2^{-(\beta+1)k_2/4-k_1/4}+2^{(\beta+1)k_2/4}2^{k_1/4}2^{-k_2(\beta+1)/2}+
     2^{-\beta k_2/4} 2^{(k_1-k_2)/4}\right)\|u\|_{L^2}\|R_k v\|_{L^2} \\
     & \lesssim 2^{-\beta k_2/4}2^{(k_1-k_2)/4} \|u\|_{L^2}\|R_k v\|_{L^2}.
  \end{align*}

  \medbreak
  \noindent
  {\bf Case 2.2\, $k_1<(\beta+1)k_2 . \quad$}
  We let $L'=k_1/2+(\beta+1)k_2/2$ (without loss of generality we may assume that $k_2$ is even).
  By the triangle inequality we have
  \begin{multline}\label{a3}
    \left\|(e^{ith(D)}u) (e^{ith(D)} R_k v)\right\|_{L^2} \\
    \leq
  \sum_{m=L'}^{-2} \sum_{j=0}^{2^{-m}-1} \sum_{j':1<|j'-j|<\min \{6,2^{-m}-1 \}} \|(e^{ith(D)}Q_{m,j}u) (e^{ith(D)}Q_{m,j'} R_k v)\|_{L^2} \\
  +\sum_{j=0}^{2^{-L'}-1} \sum_{j':|j'-j|\leq 1} \|(e^{ith(D)}Q_{m,j}u) (e^{ith(D)}Q_{m,j'} R_k v)\|_{L^2}.
  \end{multline}

  Arguing as before and noting the assumption $k_1<(\beta+1)k_2 ,$ we have
  when $1< |j-j'|<6$ and $L'\leq m\leq -4,$
  \begin{align*}
    & \left\|(e^{ith(D)}Q_{m,j} u) (e^{ith(D)}Q_{m,j'}  R_k v)\right\|_{L^2} \\ & \qquad \lesssim
    \left(\left(2^{k_1} \right)^{1/2} (2^{m})^{-1/2} + \left( \frac{2^{(\beta+1)k_2}}{2^{m}}2^{k_1}\right)^{1/2}2^{-(\beta+1)k_2/2}\right) \|Q_{m,j}  u\|_{L^2}\|Q_{m,j'} R_k v\|_{L^2}
    \\ & \qquad \lesssim 2^{k_1/2}2^{-m/2}
    \|Q_{m,j}  u\|_{L^2}\|Q_{m,j'} R_k v\|_{L^2},
  \end{align*}
  and when $|j'-j|\leq 1$ and $m=L'=k_1/2+(\beta+1)k_2/2,$
  \begin{align*}
    & \left\|(e^{ith(D)}Q_{m,j} u) (e^{ith(D)}Q_{m,j'}  R_k v)\right\|_{L^2} \\  & \qquad \qquad \qquad \qquad \qquad \lesssim
  \left(2^{(\beta+1)k_2/2}2^{k_1/2}\right)^{1/2}\left(2^{(\beta+1)k_2}\right)^{-1/2}
  \|Q_{m,j}  u\|_{L^2}\|Q_{m,j'} R_k v\|_{L^2} \\ & \qquad \qquad \qquad \qquad \qquad
      \lesssim 2^{-\beta k_2/4} 2^{(k_1-k_2)/4}\|Q_{m,j}  u\|_{L^2}\|Q_{m,j'} R_k v\|_{L^2}.
  \end{align*}
  Therefore combining the above estimates, (\ref{d1}) and the Cauchy-Schwarz inequality, we deduce from (\ref{a3}) that
  \begin{align*}
   & \left\|(e^{ith(D)} u) (e^{ith(D)}  R_k v)\right\|_{L^2} \\
     & \qquad \qquad \qquad \lesssim \left(2^{(k_1-k_2)/4}+2^{-\beta k_2/4}2^{(k_1-k_2)/4}+\sum_{ L'\leq m\leq -4} 2^{k_1/2}2^{-m/2}  \right)\| u\|_{L^2}\| R_k v\|_{L^2} \\
     & \qquad \qquad \qquad \lesssim \left(2^{k_2/2}2^{-(\beta+1)k_2/4-k_1/4}+
     2^{-\beta k_2/4} 2^{(k_1-k_2)/4}\right)\| u\|_{L^2}\| R_k v\|_{L^2} \\
     & \qquad \qquad \qquad \lesssim 2^{-\beta k_2/4}2^{(k_1-k_2)/4} \| u\|_{L^2}\| R_k v\|_{L^2}.
  \end{align*}

  \medbreak
  Combining Case 1 and Case 2 we therefore conclude that
  $$\|(e^{ith(D)}u) (e^{ith(D)}R_k v)\|_{L^2}\lesssim 2^{-\beta k_2/4}2^{(k_1-k_2)/4} \|u\|_{L^2}\|R_k v\|_{L^2}$$
  for every $k.$
  Now the almost orthogonality relation (\ref{c1}) implies
  $$\|(e^{ith(D)}u) (e^{ith(D)}v)\|_{L^2}\lesssim 2^{-\beta k_2/4}2^{(k_1-k_2)/4} \|u\|_{L^2}\|v\|_{L^2},$$
  as desired.
\end{proof}

\begin{lemma}\label{geometriclemma}
    Suppose $k_1\leq k_2-10$ and $k_2\leq M+1.$
    Let $\Omega_1,\Omega_2$ be the annuli defined by (\ref{Omega}). For every $\tau_0\in \R, \xi \in \R^2$ define
    $$\gamma_{\tau_0,\xi_0}:=\{\xi: \xi \in \Omega_1, \xi_0-\xi\in \Omega_2, \, \tau_0-h(\xi)=h(\xi_0-\xi) \}.$$
    Then
    $$\Theta:=\{\theta\in [0,2\pi]: \text{ there exists } r \text{ such that } re^{i\theta}\in \gamma_{\tau_0,\xi_0}\}$$
    is a disjoint union of at most $c$ many intervals, each of which has size less than $C2^{(k_1-k_2)/2}$ for some admissible constants $c,C.$
  \end{lemma}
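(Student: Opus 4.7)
My plan is to parametrize the curve by the radius $r_1:=|\xi|$, express the angular coordinate $\theta$ via the law of cosines as $\cos\theta = F(r_1)$, and reduce the bound on $|\Theta|$ to the total variation of $F$ combined with the H\"older-$1/2$ behavior of $\arccos$.

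After a rotation, assume $\xi_0=(s,0)$ with $s=|\xi_0|$, so that the curve is symmetric under $\xi_2\mapsto -\xi_2$. I first verify $s\gtrsim 2^{k_2}$: any $\xi\in \gamma_{\tau_0,\xi_0}$ satisfies $|\xi|<1+2^{k_1+2}$ and $|\xi_0-\xi|>1+2^{k_2-2}$, so the triangle inequality together with $k_1\le k_2-10$ yields $s\ge 2^{k_2-2}-2^{k_1+2}\ge 2^{k_2-3}$. Setting $r_2=|\xi_0-\xi|$, the energy relation $\gamma(r_1)+\gamma(r_2)=\tau_0$ with $|\gamma'|\sim 1$ solves to $r_2=g(r_1)$ smoothly, with $g'(r_1)=-\gamma'(r_1)/\gamma'(r_2)$ of magnitude $\sim 1$. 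The law of cosines for the triangle $0,\,\xi,\,\xi_0$ gives
\[
\cos\theta \;=\; F(r_1) \;:=\; \frac{s^2+r_1^2-g(r_1)^2}{2 s r_1},
\]
and the two sign choices $\pm\theta$ correspond to the reflection. The domain $\tilde I\subset (1-2^{k_1+2},1+2^{k_1+2})$ of valid $r_1$ (where the triangle inequality $|r_1-g|\le s\le r_1+g$ holds and $g(r_1)\in (1+2^{k_2-2},1+2^{k_2+2})$) is an interval, since $r_1+g(r_1)$, $r_1-g(r_1)$ and $g(r_1)$ are all strictly monotone on the parent interval: their derivatives $1\mp \gamma'(r_1)/\gamma'(r_2)$ and $g'$ are nonzero because $\gamma'(r_1)$ and $\gamma'(r_2)$ share the same sign, and their difference is of size $2^{(\beta+1)k_2}\ne 0$ as established just before the statement of the lemma. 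Consequently $\Theta$ is a union of at most two intervals, one per half-plane branch (possibly merging at $\theta\in\{0,\pi\}$).

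To control the length of each interval, I compute
\[
F'(r_1) \;=\; \frac{r_1^2+g(r_1)^2-s^2 \;-\; 2 r_1 g(r_1) g'(r_1)}{2 s r_1^2}.
\]
The crucial cancellation is $|r_1^2+g^2-s^2|\le 2 r_1 g$, which is simply a rearrangement of the triangle inequality $|r_1-g|\le s\le r_1+g$; together with $r_1,g\sim 1$ and $|g'|\lesssim 1$ it gives $|F'(r_1)|\lesssim 1/s\lesssim 2^{-k_2}$. Since $|\tilde I|\le 2^{k_1+3}$, the total variation of $F$ on $\tilde I$ is $\lesssim 2^{k_1-k_2}$.

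Finally, the elementary H\"older estimate $|\theta-\theta'|^2\lesssim |\cos\theta-\cos\theta'|$ for $\theta,\theta'\in[0,\pi]$ (immediate from $|\cos\theta-\cos\theta'|=2|\sin(\tfrac{\theta+\theta'}{2})||\sin(\tfrac{\theta-\theta'}{2})|$ together with $|\sin x|\ge (2/\pi)|x|$ on $[-\pi/2,\pi/2]$) shows that the image of $\tilde I$ under $r_1\mapsto \arccos F(r_1)$ has diameter $\lesssim \sqrt{2^{k_1-k_2}}=2^{(k_1-k_2)/2}$; the same bound then holds for each of the at most two intervals making up $\Theta$. The principal obstacle is spotting the $1/s$ cancellation in $F'$: without it, the crude bound $|F'|\lesssim 1$ would only yield $2^{k_1/2}$, and it is the combination of this gain with the lower bound $s\gtrsim 2^{k_2}$ that delivers the sharp exponent $(k_1-k_2)/2$.
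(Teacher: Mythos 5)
Your proof is correct and is built around the same key quantitative observation as the paper's: along the resonant curve, $(\cos\theta)'(r)$ is $O(2^{-k_2})$ (thanks to the $1/s$ factor and the lower bound $s\gtrsim 2^{k_2}$), and then the H\"older-$1/2$ behavior of $\arccos$ converts a variation of $O(2^{k_1-k_2})$ in $\cos\theta$ into an angular diameter of $O(2^{(k_1-k_2)/2})$. The paper obtains the same derivative bound but via the implicit function theorem applied to the defining function $f(r,\theta)=\gamma(|re^{i\theta}-r_0|)+\gamma(r)$, arriving at an expression for $(\cos\theta)'(r)$ whose $1/r_0$ decay is the same cancellation you extract more transparently from the law-of-cosines formula $\cos\theta = F(r_1)$. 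Where your argument genuinely diverges is the component count: the paper establishes that the $r$-domain of each branch has boundedly many components by counting intersections of the curve with $\partial\Omega'$ (yielding $|K|\le 8$ via a case analysis on $\partial\Omega_1$, $r_0+\partial\Omega_2$, and $\{\theta=0,\pi\}$), whereas you observe directly that after eliminating $r_2$ via the monotone function $g$, the feasible set of $r_1$ is a single interval because $r_1+g$, $r_1-g$, and $g$ are each strictly monotone (using $|\gamma'(r_1)-\gamma'(r_2)|\sim 2^{(\beta+1)k_2}>0$ and the common sign of $\gamma'$). This gives at most two intervals (one per reflection branch, possibly merging at $\theta\in\{0,\pi\}$), which is tighter and avoids the boundary-counting case analysis entirely; the trade-off is that you must verify monotonicity of three auxiliary functions, which is short given the assumption recorded just before Proposition \ref{bilinearestimatefreewave}.
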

  \begin{proof}
    We fix $\xi_0,\tau_0.$ Without loss of generality we may suppose that in polar coordinates $\xi_0=r_0e^{i\theta_0}=r_0.$
    Let $\Omega$ be the set
    $$\Omega:=\{\xi: \xi \in \Omega_1, \xi_0-\xi\in \Omega_2\}.$$
    and let $\Omega'$ be the set
    $$\Omega':=\{re^{i\theta}\in \Omega: \theta \neq 0,\pi\}.$$
    So using the above notation we have
    $$\gamma_{\tau_0,\xi_0}=\{\xi\in \Omega:\tau_0-h(\xi)=h(\xi_0-\xi)\}.$$
    Writing down the inequalities defining annuli $\Omega_1$ and $\xi_0-\Omega_2$ we see that, depending on $r_0,$ $\Omega'$ can have at most two components, which we will denote by $U'$ and $U''. $ They satisfy
    $$U'= \{re^{i\theta}\in \Omega: 0< \theta< \pi\},\quad U''=\{re^{i\theta}\in \Omega: -\pi< \theta <0 \}.$$
    In the case when there is only one component, we put the other one to be the empty set. If $\Omega'$ has no component, we put both to be the empty set.

    We first observe that
   $U'$ satisfies
   \begin{equation}\label{-10}
     U'_{\theta}:=\{r: re^{i\theta}\in U'\},\quad U'_{r}:=\{\theta: re^{i\theta}\in U'\}
   \end{equation}
  are intervals, for every $\theta,r.$ $U''$ has the same properties (\ref{-10}) with $U'$ replaced by $U''.$
  From now on we will only consider the part of $\gamma_{\tau_0,\xi_0}$ that lies in $U',$ and the same argument will apply to the part lying in $U''.$

  Let $f(r,\theta)$ be the defining function of $\gamma_{\tau_0,\xi_0}$
  $$f(r,\theta):=\gamma(|re^{i\theta}-r_0|)+\gamma(r).$$
  Noting that
  $$|re^{i\theta}-r_0|=\sqrt{r^2+r_0^2-2rr_0\cos \theta},$$
  we differentiate $f$ with respect to $r,\theta$ respectively and obtain
  \begin{equation}\label{-91}
    \partial_r f(r,\theta)= \gamma'(|re^{i\theta}-r_0|)\frac{r- r_0\cos \theta}{|re^{i\theta}-r_0|}+\gamma'(r),
  \end{equation}
  \begin{equation}\label{-90}
    \partial_\theta f(r,\theta)= \gamma'(|re^{i\theta}-r_0|)\frac{rr_0\sin \theta}{|re^{i\theta}-r_0|}.
  \end{equation}
  Recall that under the assumption $\xi=re^{i\theta}\in \Omega,$ we always have
  \begin{equation}\label{100}
    \gamma'(r),\, \gamma'(|re^{i\theta}-r_0|) \sim 1.
  \end{equation}
  Also note that in order for $\gamma_{\tau_0,\xi_0}$ to be nonempty, we must have
  \begin{equation}\label{101}
    2^{k_2}\lesssim r_0\lesssim 1.
  \end{equation}

  When $re^{i\theta}\in U',$ we have
  $$\partial_\theta f(r,\theta)>0.$$
  Therefore due to property (\ref{-10}), for every $r\in [1-2^{k_1+2},1+2^{k_1+2}],$ there exists at most one $\theta \in (0,\pi)$ such that $re^{i\theta}\in U'$ and $f(r,\theta)=\tau_0.$ So we can parametrize
  $$\gamma_1:= \gamma_{\tau_0,\xi_0} \cap U'$$
  by $\theta=\theta(r).$ By the implicit function theorem we know that $\theta(r)$ is a smooth function defined on a union of disjoint intervals $I:=\bigcup_i I_i$ in $(1-2^{k_1+2},1+2^{k_1+2}),$ and
  \begin{equation}\label{thetaprime}
    \theta'(r)=
  \frac{\gamma'(|re^{i\theta}-r_0|)(r-r_0\cos\theta)+\gamma'(r)|re^{i\theta}-r_0|}{\gamma'(|re^{i\theta}-r_0|)rr_0\sin \theta}.
  \end{equation}
  We write $I_i=(a_i,b_i)$ where $1-2^{k_1+2}\leq a_i< b_i\leq 1+2^{k_1+2}.$

  We fix $i$ in the index set $J$ of $I_i$ and $r_i \in I_i$ By (\ref{thetaprime}) we have
  $$(\cos \theta)'(r)=-\frac{\gamma'(|re^{i\theta}-r_0|)(r-r_0\cos\theta)+\gamma'(r)|re^{i\theta}-r_0|}{rr_0\gamma'(|re^{i\theta}-r_0|)},$$
  which implies for $r\in (a_i,b_i)$
  $$\cos \theta (r)=\cos \theta (r_i)-\int_{r_i}^{r}
  \frac{\gamma'(|re^{i\theta}-r_0|)(r-r_0\cos\theta)+\gamma'(r)|re^{i\theta}-r_0|}{rr_0\gamma'(|re^{i\theta}-r_0|)}dr.$$
  Since for every $r\in (a_i,b_i)$
  $$\left|
  \frac{\gamma'(|re^{i\theta}|)(r-r_0\cos\theta)+\gamma'(r)|re^{i\theta}-r_0|}{rr_0\gamma'(|re^{i\theta}-r_0|)}
  \right| \lesssim 2^{-k_2}$$
  and $b_i-a_i\lesssim 2^{k_1},$
  we therefore conclude that for every $r\in (a_i,b_i)$
  $$|\theta(r)-\theta(r_i)|\lesssim 2^{(k_1-k_2)/2}.$$

  To finish the proof we are left to show that the cardinality of the index set $J$ is bounded by some admissible constant $c.$
  Applying implicit function theorem to $f(r,\theta)$ on $\Omega'+B(0,2^{k_1}),$ the $2^{k_1}$ neighborhood of $\Omega',$ we see that
  for every $j\in J,$  $a_je^{i\theta(a_j)},b_je^{i\theta(b_j)}$ are contained in $\partial \Omega',$ and for distinct $j,k\in J,$ $\{a_je^{i\theta(a_j)},b_je^{i\theta(b_j)}\}\neq \{a_ke^{i\theta(a_k)},b_ke^{i\theta(b_k)}\}.$
  So it suffices to show that
  $$K:=\{re^{i\theta}\in \partial \Omega': \tau_0-\gamma(r)= \gamma \left(\left|r_0-re^{i\theta}\right|\right)\}$$
  has cardinality no more than $c$ for some admissible constant $c.$
  Note that $\partial \Omega'$ is a subset of $\partial \Omega_1 \cup
  (r_0+\partial \Omega_2)\cup \{re^{i\theta}\in \overline{\Omega_1}: \theta=0,\pi \}.$
  Previous argument has already shown that
  $$|K\cap \partial \Omega_1|\leq 2.$$
  Similarly we have\footnote{One way to see this is to repeat the previous argument in polar coordinates centered at $\xi_0.$}
  $$|K\cap(r_0+ \partial \Omega_2)|\leq 2.$$
  Setting $\theta =0$ in (\ref{-91}) yields
  $$\partial_r f(r,0)=\gamma'(|r-r_0|)\frac{r-r_0}{|r-r_0|}+\gamma'(r).$$
  Therefore for $re^{i0}\in \partial \Omega'$
  $$\partial_{rr} f(r,0)=\gamma''(r)+\gamma''(|r_0-r|)$$
  is either always positive or always negative, as $|\gamma''(|r_0-r|)|\geq |\gamma''(r)|/3.$
  Noting that $\{r: re^{i0}\in \partial \Omega'\}$ is an interval, we therefore conclude that
  $$|K\cap \{re^{i\theta}\in \overline{\Omega_1}:\theta=0\}|\leq 2.$$
  Similarly we have
  $$|K\cap \{re^{i\theta}\in \overline{\Omega_1}:\theta=\pi\}|\leq 2.$$
  Therefore $|K|\leq 8$ and the proof is complete.

\end{proof}

Using the extension property of $U^p$ spaces (Proposition 2.16 in \cite{hadac2009well}), we obtain
\begin{corollary}\label{bilinearestimateU2}
  Suppose $k_1\leq k_2-10$ and $k_2\leq M+1.$ Then we have
  \begin{equation}\label{2}
    \left\|P_{\leq k_1} u P_{k_2\leq \cdot \leq M+1}v\right\|_{L^2_{t,x}}\lesssim 2^{-\beta k_2/4} 2^{(k_1-k_2)/4}\left\|P_{\leq k_1}u\right\|_{U^2_{h(D)}}\left\|P_{k_2\leq \cdot \leq M+1}v\right\|_{U^2_{h(D)}}.
  \end{equation}
\end{corollary}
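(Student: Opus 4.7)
The plan is to invoke the extension property of $U^p$ spaces (Proposition~2.16 of \cite{hadac2009well}) to lift the free-wave bilinear estimate from Proposition~\ref{bilinearestimatefreewave} to the $U^2_{h(D)}$ setting. This principle states that any bilinear estimate holding with constant $C$ whenever the inputs are free solutions with $L^2$ initial data automatically holds with the same constant when the $L^2$ norms are replaced by the $U^2_{h(D)}$ norms of the inputs.

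I would set $U := P_{\leq k_1}u$ and $V := P_{k_2\leq\cdot\leq M+1}v$, and, for a given $\epsilon > 0$, fix atomic decompositions $U = \sum_j \lambda_j a_j$, $V = \sum_l \mu_l b_l$ in $U^2_{h(D)}$ with $\sum_j|\lambda_j| \leq (1+\epsilon)\|U\|_{U^2_{h(D)}}$ and similarly for $V$. Since the frequency projectors commute with $e^{ith(D)}$ and contract on $L^2$, applying them to each atom preserves atomic structure while enforcing the needed frequency supports; so I may take each $a_j(t) = \sum_i 1_{[t_{i-1}^j, t_i^j)}(t)\,e^{ith(D)}\phi_{i-1}^j$ with $\sum_i \|\phi_{i-1}^j\|_{L^2}^2 \leq 1$ and $\widehat{\phi_{i-1}^j}$ supported in the $P_{\leq k_1}$ annulus, and analogously for $b_l$ with pieces $\psi_{m-1}^l$ supported in the $P_{k_2\leq\cdot\leq M+1}$ annulus.

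The crucial step is bounding a single product $\|a_j b_l\|_{L^2_{t,x}}$. Taking the common refinement of the two partitions produces disjoint intervals $\{I_n\}_n$ on which $a_j(t) = e^{ith(D)}\alpha_n$ and $b_l(t) = e^{ith(D)}\beta_n$, with $\alpha_n\in\{\phi_{i-1}^j\}$ and $\beta_n\in\{\psi_{m-1}^l\}$. Disjointness of the $I_n$ gives $L^2_{t,x}$-orthogonality, and Proposition~\ref{bilinearestimatefreewave} applied on each $I_n$ (trivially bounding the restricted norm by the unrestricted one) yields
\[
\|a_j b_l\|_{L^2_{t,x}}^2 = \sum_n \|1_{I_n}(e^{ith(D)}\alpha_n)(e^{ith(D)}\beta_n)\|_{L^2_{t,x}}^2 \lesssim 2^{-\beta k_2/2}\,2^{(k_1-k_2)/2}\sum_n \|\alpha_n\|_{L^2}^2 \|\beta_n\|_{L^2}^2.
\]
Each pair $(i,m)$ is realized on at most one $I_n$, so the right-hand sum is dominated by $\bigl(\sum_i \|\phi_{i-1}^j\|^2\bigr)\bigl(\sum_m \|\psi_{m-1}^l\|^2\bigr) \leq 1$, hence $\|a_j b_l\|_{L^2_{t,x}} \lesssim 2^{-\beta k_2/4}\,2^{(k_1-k_2)/4}$.

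Summing via the triangle inequality over $j,l$ and letting $\epsilon\to 0$ concludes the proof. I do not expect any real obstacle: the argument is the standard atomic-to-general-input transfer in the $U^p$--$V^p$ calculus. The only points requiring a moment of care are the staircase bookkeeping for $\sum_n \|\alpha_n\|^2 \|\beta_n\|^2 \leq 1$, and the trivial verification that the frequency projectors preserve the $U^2_{h(D)}$-atomic structure.
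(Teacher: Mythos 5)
Your proof is correct, and it is essentially the paper's approach: the paper obtains Corollary~\ref{bilinearestimateU2} by citing the extension (transfer) property of $U^p$ spaces, Proposition~2.16 of \cite{hadac2009well}, and what you have written is a self-contained proof of that principle specialized to this bilinear estimate. The atomic decomposition, the common refinement of the two step partitions, and the observation that each pair of atom pieces is realized on at most one refined interval (so that $\sum_n \|\alpha_n\|_{L^2}^2\|\beta_n\|_{L^2}^2\leq 1$) are precisely the standard steps behind the cited transfer result, with the only bookkeeping caveat being that you should fatten $P_{\leq k_1}$ and $P_{k_2\leq\cdot\leq M+1}$ slightly so they act as the identity on $P_{\leq k_1}u$ and $P_{k_2\leq\cdot\leq M+1}v$, which costs nothing in Proposition~\ref{bilinearestimatefreewave}.
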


The right hand side of (\ref{2}) is evaluated in $U^2-$based spaces, which, as we shall see later, is insufficient for our fixed point argument. However,  we can use the following interpolation result from \cite{hadac2009well} to transfer from $U^p-$based estimates to $V^p-$based estimates, provided we have an additional $U^q-$based estimate for some $q>p.$ Another way of transferring from $U^p$ to $V^p$ can be found in  \cite{herrcandy2016transference}, which is not easy to implement here.
\begin{proposition}\label{U2toV2}
  Suppose $1< p< q.$ Let $E$ be a Banach space and $T:U^q\rightarrow E$ be a linear bounded operator with operator norm $C_q.$ Suppose in addition there exists $C_p\in (0,C_q]$ such that the estimate $\|Tu\|_{E}\leq C_p\|u\|_{U^p}$ holds for all $u\in U^p.$ Then we have for every $u\in V^p_{rc}$
  $$\|Tu\|_E\leq C_{p,q}C_p\left(\log \frac{C_q}{C_p}+1\right)\|u\|_{V^p},$$
  where $C_{p,q}$ is a constant depending only on $p,q.$
\end{proposition}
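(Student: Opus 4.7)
The approach is real-interpolation by atomic decomposition of $u \in V^p_{rc}$: build step function approximations $w_k$ of $u$ at dyadic scales $2^{-k}$, estimate each increment $w_{k+1} - w_k$ in both $U^p$ and $U^q$, then at each scale $k$ apply $T$ using whichever of the bounds $C_p, C_q$ is more efficient, and sum. By homogeneity assume $\|u\|_{V^p} = 1$.

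For each $k \geq 0$, define $w_k$ by the stopping-time procedure: set $\tau^{(k)}_0 = -\infty$ and
\[
\tau^{(k)}_{j+1} = \inf \left\{ t > \tau^{(k)}_j : \|u(t) - u(\tau^{(k)}_j)\|_{L^2_x} > 2^{-k} \right\},
\]
with $w_k(t) := u(\tau^{(k)}_j)$ on $[\tau^{(k)}_j, \tau^{(k)}_{j+1})$. Right-continuity of $u$ gives $\|u - w_k\|_{L^\infty_t L^2_x} \leq 2^{-k}$, while successive jumps of $w_k$ being each of size $\geq 2^{-k}$ in $L^2_x$ and the identity $\|u\|_{V^p}^p = 1$ force the number of jumps to satisfy $N_k \leq 2^{kp}$. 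Using $\lim_{t \to -\infty} u = 0$, telescope
\[
u = w_0 + \sum_{k \geq 0}(w_{k+1} - w_k),
\]
which converges in $L^\infty_t L^2_x$. Each increment $w_{k+1} - w_k$ is itself a step function with at most $\lesssim 2^{(k+1)p}$ pieces, each of $L^2_x$-norm $\leq 3\cdot 2^{-k}$ (by the triangle inequality against $u$). Treating it as a single $U^r$-atom times a scalar yields, for $r = p$ and $r = q$,
\[
\|w_{k+1} - w_k\|_{U^p} \lesssim \bigl(2^{(k+1)p}\bigr)^{1/p} \cdot 2^{-k} \lesssim 1, \qquad \|w_{k+1} - w_k\|_{U^q} \lesssim \bigl(2^{(k+1)p}\bigr)^{1/q} \cdot 2^{-k} \lesssim 2^{-k(1-p/q)},
\]
and similarly $\|w_0\|_{U^p} \lesssim 1$.

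Applying $T$ termwise, for each $k$ we estimate the contribution of $T(w_{k+1} - w_k)$ by the better of $C_p \cdot O(1)$ and $C_q \cdot O\bigl(2^{-k(1-p/q)}\bigr)$. Splitting the sum at the critical scale
\[
k^\ast \sim \frac{1}{1 - p/q} \log_2(C_q/C_p),
\]
where these two bounds coincide, the $C_p$-portion contributes $\lesssim C_p \cdot k^\ast$ and the $C_q$-portion sums geometrically to $\lesssim C_q \cdot 2^{-k^\ast(1-p/q)} \lesssim C_p$. Combining with the easy $\|T w_0\|_E \lesssim C_p$ bound, we obtain $\|Tu\|_E \lesssim C_p \bigl(1 + \log(C_q/C_p)\bigr)$, with implicit constant depending only on $p$ and $q$, as claimed.

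The main obstacle is the rigorous bookkeeping for the stopping-time construction: verifying that the stopping times do not accumulate (which follows from the $V^p$-count), that the telescoping series actually represents $u$ in a sense strong enough to commute with $T$ (which uses both right-continuity and the left vanishing condition, together with the embedding $V^p_{rc} \hookrightarrow U^q$ implicit in the partial-sum Cauchy argument in $U^q$), and that the single-atom $U^p$ estimate for $w_{k+1} - w_k$ is unaffected by possible interleaving of the partitions at scales $k$ and $k+1$ (one takes the common refinement, losing only a factor of two in the piece count). Once these details are in place, the interpolation inequality is just the balancing computation above.
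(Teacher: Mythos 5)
Your proof is correct. Worth noting: the paper does not actually supply a proof of this proposition---it simply cites Had\v{a}c--Herr--Koch \cite{hadac2009well}---so there is no ``paper's own proof'' to compare against, but the argument you give is precisely the one in that reference. The stopping-time construction producing approximants $w_k$ at scales $2^{-k}$, the jump-count bound $N_k \lesssim 2^{kp}$ from the $V^p$ hypothesis, the two-sided $U^p$/$U^q$ estimates for the increments $w_{k+1}-w_k$ via the single-atom bound, the Cauchy-in-$U^q$ argument to justify termwise application of $T$, and the balancing at the scale $k^* \sim (1-p/q)^{-1}\log_2(C_q/C_p)$ are all the standard steps. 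Your accounting is accurate, including the bound $\|u\|_{L^\infty_t L^2_x}\le \|u\|_{V^p}$ (so that $w_0=0$ after normalization) and the use of right-continuity to show jumps have $L^2$-size at least $2^{-k}$. The remarks you flag as ``obstacles''---non-accumulation of stopping times, common refinement of the two partitions, and identifying the $U^q$-limit of the partial sums with $u$ via the $L^\infty$-embedding---are indeed the only points requiring care, and your sketches for handling them are sound.
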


In our case we already have a $U^2-$based estimate in Corollary \ref{bilinearestimateU2}. We can also obtain a $U^4-$based estimate from Proposition \ref{linearestimate}. Indeed, under the same condition as in Corollary \ref{bilinearestimateU2}, by H\"{o}lder's inequality we have
\begin{align*}
  \|P_{k_1} u P_{k_2\leq \cdot \leq M+1}v\|_{L^2_{t,x}} & \lesssim \|P_{ k_1} u \|_{L^4_{t,x}} \| P_{k_2\leq \cdot \leq M+1}v\|_{L^4_{t,x}} \\
   & \lesssim  2^{-\beta (k_1+k_2)/8}\|P_{k_1}u\|_{U^4_{h(D)}}\|P_{k_2\leq \cdot \leq M+1}v\|_{U^4_{h(D)}}.
\end{align*}
Therefore applying Proposition \ref{U2toV2} twice\footnote{In fact we apply Proposition \ref{U2toV2} twice to obtain the estimate for enlarged projections: $$\|P_{k_1-2\leq \cdot \leq k_1+2} u P_{k_2-2\leq \cdot \leq M+3}v\|_{L^2_{t,x}}\lesssim 2^{-\beta k_2/4} 2^{(k_1-k_2)/4} |k_2-k_1|^2 \|u\|_{V^2_{h(D)}}\|v\|_{V^2_{h(D)}}.$$ Then we substitute $u$ and $v$ by $P_{k_1}u$ and $P_{k_2\leq \cdot \leq M+1}v$ respectively.}, and noticing the assumption that $k_1\leq k_2-10,$ we obtain
$$\|P_{k_1} u P_{k_2\leq \cdot \leq M+1}v\|_{L^2_{t,x}}\lesssim 2^{-\beta k_2/4} 2^{(k_1-k_2)/4} |k_2-k_1|^2 \|P_{k_1}u\|_{V^2_{h(D)}}\|P_{k_2\leq \cdot \leq M+1}v\|_{V^2_{h(D)}}$$
if $P_{k_1}u, P_{k_2\leq \cdot \leq M+1} v\in V^2_{h(D),rc}.$
Summing the above inequality over $k_1$ and using the triangle inequality we obtain the following corollary:
\begin{corollary}\label{bilinearestimate3V2}
  Suppose $k_1\leq k_2-10$ and $k_2\leq M+1.$ If $P_{\leq k_1}u,\, P_{k_2\leq \cdot \leq M+1}v\in V^2_{rc, h(D)},$ then
  \begin{equation}\label{-95}
    \|P_{\leq k_1} u P_{k_2\leq \cdot \leq M+1}v\|_{L^2_{t,x}}\lesssim 2^{-\beta k_2/4}\|P_{\leq k_1}u\|_{V^2_{h(D)}}\|P_{k_2\leq \cdot \leq M+1}v\|_{V^2_{h(D)}}
  \end{equation}
\end{corollary}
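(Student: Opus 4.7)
The plan is to derive the corollary from the dyadic $V^2_{h(D)}$-based bilinear estimate displayed immediately above the statement, namely
\[
\|P_j u \cdot P_{k_2\leq \cdot \leq M+1} v\|_{L^2_{t,x}}\lesssim 2^{-\beta k_2/4} \, 2^{(j-k_2)/4} \, |k_2-j|^2 \, \|P_j u\|_{V^2_{h(D)}} \|P_{k_2\leq \cdot \leq M+1} v\|_{V^2_{h(D)}},
\]
valid for each integer $j \leq k_2 - 10$, and to sum it over the dyadic range $j \leq k_1$ via the triangle inequality.

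Concretely, I would write $P_{\leq k_1} u = \sum_{j \leq k_1} P_j u$ and apply the triangle inequality in $L^2_{t,x}$ to reduce matters to bounding $\sum_{j \leq k_1} \|P_j u \cdot P_{k_2\leq \cdot \leq M+1} v\|_{L^2_{t,x}}$. Since $j \leq k_1 \leq k_2 - 10$ for every such $j$, the dyadic estimate applies term by term. After pulling out the common factor $\|P_{k_2\leq \cdot \leq M+1} v\|_{V^2_{h(D)}}$, I would replace each $\|P_j u\|_{V^2_{h(D)}}$ by $\|P_{\leq k_1} u\|_{V^2_{h(D)}}$ using the $V^2_{h(D)}$ analogue of (\ref{-99}): since $P_j u = P_j(P_{\leq k_1} u)$ for $j \leq k_1$, the frequency-truncation bound gives $\|P_j u\|_{V^2_{h(D)}} \lesssim \|P_{\leq k_1} u\|_{V^2_{h(D)}}$ with a constant uniform in $j$.

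What remains is the numerical sum $\sum_{j \leq k_1} 2^{(j-k_2)/4} |k_2-j|^2$. With $n = k_2 - j$ this becomes $\sum_{n \geq k_2-k_1} 2^{-n/4} n^2$, a geometrically convergent tail bounded by an admissible constant thanks to the gap $k_2 - k_1 \geq 10$. Combining with the already-present factor $2^{-\beta k_2/4}$ yields exactly the inequality claimed in the corollary.

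No serious obstacle arises at this step: the analytic heavy lifting has already been done in Proposition \ref{bilinearestimatefreewave} and in the $U^2$-to-$V^2$ transference via Proposition \ref{U2toV2}. The one subtle point worth flagging is that one cannot take a shortcut via Cauchy--Schwarz in $j$ combined with a square-summability bound $\sum_j \|P_j u\|_{V^2_{h(D)}}^2 \lesssim \|u\|_{V^2_{h(D)}}^2$, because the almost-orthogonality for $V^2$ actually goes the opposite way; instead one directly replaces $\|P_j u\|_{V^2_{h(D)}}$ by $\|P_{\leq k_1} u\|_{V^2_{h(D)}}$ and absorbs the resulting loss into the geometric factor $2^{(j-k_2)/4}$.
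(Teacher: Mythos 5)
Your argument matches the paper's intended proof: decompose $P_{\leq k_1}u=\sum_{j\leq k_1}P_ju$, apply the dyadic $V^2_{h(D)}$ bilinear estimate to each piece, dominate each $\|P_ju\|_{V^2_{h(D)}}$ by $\|P_{\leq k_1}u\|_{V^2_{h(D)}}$ via the frequency-truncation bound, and sum the convergent numerical series $\sum_{j\leq k_1}2^{(j-k_2)/4}|k_2-j|^2\lesssim 1$; your remark that one cannot shortcut via $\ell^2$ almost-orthogonality in $V^2$ is also correct, since the $Y^0$-type square-function norm controls $V^2$ from above rather than below. One small inaccuracy: $P_ju=P_j(P_{\leq k_1}u)$ fails at the endpoint $j=k_1$, because the symbol $P_{\leq k_1}(\xi)$ is not identically one on all of $\supp P_{k_1}$; but since $P_{k_1}u=P_{\leq k_1}u-P_{\leq k_1-1}\bigl(P_{\leq k_1}u\bigr)$, the frequency-truncation bound still yields $\|P_{k_1}u\|_{V^2_{h(D)}}\lesssim\|P_{\leq k_1}u\|_{V^2_{h(D)}}$ and $P_{k_1}u\in V^2_{rc,h(D)}$, so the conclusion is unaffected.
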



The localized linear and bilinear estimates discussed above allow us to obtain the following crucial estimate on the nonlinear term, whose proof will occupy the next section. We recall that $I_T$ is the operator given by
$$I_T(f)=\int_{0}^{t}1_{[0,T)}(s) e^{i(t-s)h(D)}f(s)ds.$$
\begin{proposition}\label{nonlinearestimate}
  For $T\in [0,\infty]$ we have
  \begin{equation}\label{nonlinearestimateeqn}
    \left\|I_T\left(A(D)\left(P_{\leq M}u_1  \overline{P_{\leq M}u_2 } P_{\leq M}u_3 \right)\right)\right\|_{X^0}\lesssim \|u_1\|_{X^0}\|u_2\|_{X^0}\|u_3\|_{X^0}.
  \end{equation}
\end{proposition}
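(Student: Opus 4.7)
The plan is to reduce (\ref{nonlinearestimateeqn}) to a quadrilinear estimate by duality, dyadically decompose all four factors, absorb the multiplier $A(D)$ onto the dual variable to extract the key smallness, and then pair the factors so as to invoke the localized Strichartz and bilinear $L^2$ estimates from the previous section.

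First I would apply Proposition~\ref{dualityX0} to bound the left-hand side of (\ref{nonlinearestimateeqn}) by
\[
\sup_{\|v\|_{Y^0}\leq 1}\left|\int_0^T\!\!\int_{\R^2} A(D)\bigl(P_{\leq M}u_1\,\overline{P_{\leq M}u_2}\,P_{\leq M}u_3\bigr)\,\overline{v}\,dx\,dt\right|.
\]
By Plancherel, $A(D)$ may be moved onto $v$ at the price of passing to the adjoint symbol $\overline{A(\xi)}$; setting $w=A(D)^*v$, the assumption (\ref{assumptionA}) together with (\ref{4}) yields the crucial dyadic smallness
\[
\|P_{k}w\|_{V^2_{h(D)}}\lesssim 2^{k\beta/2}\,\|P_{k}v\|_{V^2_{h(D)}},\qquad k\leq M-1,
\]
which is the quantitative expression of the null structure. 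What remains is to control
\[
\sum_{k_1,k_2,k_3,k_4\leq M}\left|\int\!\!\int (P_{k_1}u_1)(\overline{P_{k_2}u_2})(P_{k_3}u_3)(\overline{P_{k_4}w})\,dx\,dt\right|
\]
by the product $\|u_1\|_{X^0}\|u_2\|_{X^0}\|u_3\|_{X^0}\|v\|_{Y^0}$.

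Next I would exploit the frequency-support constraint $\xi_1-\xi_2+\xi_3=\xi_4$: for a nonzero contribution, the angular separation of the $\xi_i$'s is constrained, and in particular the two largest of $k_1,k_2,k_3,k_4$ must be comparable up to an admissible constant. In each off-diagonal term I would apply Cauchy--Schwarz, splitting the four factors into two $L^2_{t,x}$ pairs chosen so that each pair contains one low-frequency and one high-frequency factor. Invoking Corollary~\ref{bilinearestimate3V2} on each pair then produces a bilinear gain of $2^{-\beta k_{\mathrm{high}}/4}$ per pair; when combined with the null factor $2^{k_4\beta/2}$ this yields a geometric decay in the dyadic indices which is absolutely summable once the $\ell^2$ orthogonality encoded in the $X^0$ and $Y^0$ norms is taken into account.

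The delicate regime is the \emph{diagonal case} where all four scales $k_1,\dots,k_4$ are comparable. There the high/low separation required by the bilinear estimate is absent, and one instead uses H\"older's inequality together with the Strichartz bound of Corollary~\ref{linearestimateV2} to dominate the term by a product of four $L^4_{t,x}$ norms, each carrying a factor $2^{-\beta k/8}$. The four factors multiply to $2^{-\beta k/2}$, which is \emph{exactly} cancelled by the null factor $2^{\beta k/2}$, leaving no pointwise-in-$k$ decay. Summability is then recovered only from the near-diagonal restriction on the four indices, which reduces the sum to an essentially one-dimensional one, so that the $\ell^2$ structure of $X^0$ and $Y^0$ combined with $\ell^2\hookrightarrow \ell^\infty$ closes the estimate.

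The main obstacle is precisely this diagonal regime: the null cancellation is saturated, so one has to verify carefully that the frequency-support constraint really confines the admissible $(k_1,k_2,k_3,k_4)$ to a one-parameter family and that the angular decoupling from the bilinear analysis of Section~\ref{estimatessection} is unnecessary there. Secondary technical points are the $|k_2-k_1|^2$ logarithmic losses introduced by the $U^p$-to-$V^p$ transfer in Proposition~\ref{U2toV2}, which must be absorbed into the geometric summation, and the care needed to distribute the three $X^0$ norms and one $Y^0$ norm across the four factors through the embedding (\ref{embeddingX0Y0}) so that the final supremum over $\|v\|_{Y^0}\leq 1$ produces the bound (\ref{nonlinearestimateeqn}).
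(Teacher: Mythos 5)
Your overall framework — reduce to a quadrilinear dual form via Proposition~\ref{dualityX0}, transfer $A(D)$ to $v$ to extract the gain $2^{k\beta/2}$, dyadically decompose, and close with Strichartz and bilinear $L^2$ estimates with a Cauchy--Schwarz summation in the $\ell^2$-based norms — matches the paper's strategy, and your analysis of the near-diagonal regime (exact cancellation of $2^{-\beta k/2}$ against $2^{\beta k/2}$, closed by $\ell^2$ summability) is correct and is indeed part of the paper's treatment of $I_1$.

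However, there is a genuine gap. Your key structural claim, that the constraint $\xi_1+\xi_3=\xi_2+\xi_4$ forces the two largest of $k_1,k_2,k_3,k_4$ to be comparable, is false. Take for instance $\xi_1=\xi_3=e^{i\epsilon}$, $\xi_2=(1,0)$ all on the unit circle with $\epsilon=2^{-50}$; then $\xi_4=2e^{i\epsilon}-(1,0)$ satisfies $||\xi_4|-1|\approx 2^{-100}$, so $k_4$ can be arbitrarily larger than $k_1,k_2,k_3$. Consequently the regime where all three scales $k_1,k_2,k_3$ are far below the dual scale $k=k_4$ (the paper's $I_4$) is nonempty, and there your pairing strategy breaks: pairing the $v$-factor with one low factor yields one high/low bilinear, but the remaining pair consists of two low factors that may live at the same scale, so Corollary~\ref{bilinearestimate3V2} offers no separation gain. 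Running the naive version of your plan there — one bilinear giving $2^{-\beta k/4}2^{(k_1-k)/4}$, two $L^4$ Strichartz factors giving $2^{-\beta k_1/4}$, and the null factor $2^{\beta k/2}$ — produces a total of order $2^{(\beta-1)(k-k_1)/4}$, which fails to sum over $k_1<k-10$ (it even grows when $\beta>1$). What the paper does instead, and what you are missing, is an angular decomposition into $N\sim 2^{k/2}$ arcs (the operators $T_i$), together with Lemma~\ref{geometriclemma} and the angularly localized version of Proposition~\ref{bilinearestimateoriginal}. This produces a transversality gain $\theta^{-1/2}\sim 2^{-k/4}$ for each bilinear pair, with $k$ the output scale rather than the scale of the higher factor in the pair; the two such gains contribute $2^{-k/2}\leq 2^{-\beta k/2}$, which cancels the null factor, and the residual exponents $2^{(k_i-k_j)/8}$ give the geometric summability. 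Without this angular step, your argument does not close $I_4$ for any $\beta\geq 1$.
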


\section{Proof of Proposition \ref{nonlinearestimate}}
If $k_1,k_2,k_3$ are three nonpositive integers, then we let $\tilde{k}_1,\tilde{k}_2,\tilde{k}_3$ be the increasing rearrangement of $k_1,k_2,k_3,$ that is, $\tilde{k}_1\geq \tilde{k}_2\geq \tilde{k}_3.$
Observe that when $u_1,u_2,u_3\in X^0,$ we have
$$A(D)\left(P_{\leq M}u_1(s) \overline{P_{\leq M}u_2(s)} P_{\leq M}u_3(s)\right)\in L^1_{t,loc}L^2_x$$
because of H\"{o}lder's inequality, the embedding $X^0\subset L^\infty_tL^2_x,$ and Bernstein's inequality.
Therefore by Proposition \ref{dualityX0} we have
\begin{multline*}
  \left\|I_T\left(A(D)\left(P_{\leq M}u_1 \overline{P_{\leq M}u_2} P_{\leq M}u_3\right)\right)\right\|_{X^0} \\
  =\sup_{\|v\|_{Y^0}\leq 1} \int_{0}^{T}\int_{\R^2} \left| \left(A(D)\left(P_{\leq M}u_1 \overline{P_{\leq M}u_2} P_{\leq M}u_3\right)\right)\overline{v} dxdt \right|.
\end{multline*}
We fix $v\in Y^0$ with $\|v\|_{Y^0}\leq 1.$
By Plancherel's theorem we can move the Fourier multiplier $A(D)$ to act on $v.$ Therefore we have
\begin{align}\label{5}
   & \left| \int_{0}^{T}\int_{\R^2} \left(A(D)\left(P_{\leq M}u_1 \overline{P_{\leq M}u_2} P_{\leq M}u_3\right)\right)\overline{v}dxdt \right| \\
   & \qquad \qquad \qquad \qquad \qquad \qquad =\left| \int_{0}^{T}\int_{\R^2} \left(P_{\leq M}u_1 \overline{P_{\leq M}u_2} P_{\leq M}u_3\right)\overline{A(D)v}dxdt \right|  \nonumber \\
   & \qquad \qquad \qquad \qquad \qquad \qquad \leq \sum_{k\leq M} \left|\int_{0}^{T}\int_{\R^2} \left(P_{\leq M}u_1 \overline{P_{\leq M}u_2} P_{\leq M}u_3\right)\overline{A(D)P_kv}dxdt \right|. \nonumber
\end{align}
Above we have used the support property of $A(\xi)$ to restrict the sum over $k\in \Z$ to $k\leq M.$

To shorten notation, we let
$$f_{k_1,k_2,k_3,k}:=P_{k_1}u_1 \overline{P_{k_2}u_2} P_{k_3}u_3\overline{A(D)P_kv}.$$
We similarly define  $f_{k_1\leq M;k_2\leq M;k_3\leq M;k}$ and others in the obvious way.

We can split the last sum in (\ref{5}) into four parts $I_1,I_2,I_3,I_4,$
where
\begin{align*}
   & I_1=\sum_{k\leq M} \left|\int_{0}^{T}\int_{\R^2} \sum_{k-10 \leq \tilde{k_1},\tilde{k_2},\tilde{k_3}\leq M} f_{k_1,k_2,k_3,k}dxdt \right| \\
   & I_2=\sum_{k\leq M}\,\left|\int_{0}^{T}\int_{\R^2} \sum_{k-10 \leq \tilde{k}_1,\tilde{k}_2 \leq M; \tilde{k}_3< k-10} f_{k_1,k_2,k_3,k}dxdt \right| \\
   & I_3=\sum_{k\leq M}\,\left|\int_{0}^{T}\int_{\R^2} \sum_{k-10\leq \tilde{k}_1\leq M; \tilde{k}_2, \tilde{k}_3< k-10} f_{k_1,k_2,k_3,k}dxdt \right| \\
   & I_4=\sum_{k\leq M}\, \left|\int_{0}^{T}\int_{\R^2} \sum_{\tilde{k}_1,\tilde{k}_2,\tilde{k}_3 < k-10} f_{k_1,k_2,k_3,k}dxdt \right|.
\end{align*}

We estimate the four sums separately. Our general strategy will be to use Corollary \ref{linearestimateV2} and Corollary \ref{bilinearestimate3V2} to establish the estimates
$$I_i\leq \|u_1\|_{X^0}\|u_2\|_{X^0}\|u_3\|_{X^0}$$
for $i=1,2,3,4,$ and therefore conclude Proposition \ref{nonlinearestimate}.

\medbreak

\noindent {\bf Estimate of $I_1:$}

First we consider the near diagonal sum, that is, the sum over $k_1,k_2,k_3\in [k-10,k+10).$ In this case
we use H\"{o}lder's inequality and Corollary \ref{linearestimateV2} four times to conclude that
\begin{align*}
   \left|\int_{0}^{T} \int_{\R^2}f_{k_1,k_2,k_3,k}dxdt\right| &
  = \left|\int_{0}^{T} \int_{\R^2}P_{k_1}u_1 \overline{P_{k_2}u_2} P_{k_3}u_3 \overline{P_kA(D) v}dxdt\right|  \\
   & \lesssim \|P_{k_1}u_1\|_{L^4_{t,x}}\|P_{k_2}u_2\|_{L^4_{t,x}}
   \|P_{k_3}u_3\|_{L^4_{t,x}}\|P_{k}A(D)v\|_{L^4_{t,x}} \\
   & \lesssim 2^{-4k\beta/8} 2^{\beta k/2} \|P_{k_1}u_1\|_{V^2_{h(D)}}
   \|P_{k_2}u_2\|_{V^2_{h(D)}}\|P_{k_3}u_3\|_{V^2_{h(D)}}
   \|P_{k}v\|_{V^2_{h(D)}}\\
   & \lesssim  \|P_{k_1}u_1\|_{V^2_{h(D)}}
   \|P_{k_2}u_2\|_{V^2_{h(D)}}\|P_{k_3}u_3\|_{V^2_{h(D)}}
   \|P_{k}v\|_{V^2_{h(D)}}.
\end{align*}
We then need to estimate the sum over the near diagonal region $k_1,k_2,k_3\in [k-10,k+10).$ Using the Cauchy-Schwartz inequality and the embedding $U^2\subset V^2_{rc},$  we have
\begin{align*}
   & \sum_k \sum_{k_1,k_2,k_3=k-10}^{k+9} \left|\int_{0}^{T} \int_{\R^2}f_{k_1,k_2,k_3,k}dxdt\right| \\
   & \lesssim \left( \sum_{k} \|P_{k}u_1\|_{U^2_{h(D)}}^2\right)^{1/2} \left( \sum_{k}   \|P_{k}u_2\|_{U^2_{h(D)}}^2 \right)^{1/2}
   \left( \sum_{k} \|P_{k}u_3\|_{U^2_{h(D)}}^2\right)^{1/2} \left( \sum_{k}   \|P_{k}v\|_{V^2_{h(D)}}^2 \right)^{1/2} \\
   & \lesssim \|u_1\|_{X^0} \|u_2\|_{X^0} \|u_3\|_{X^0} \|v\|_{Y^0} \\
   & \lesssim \|u_1\|_{X^0} \|u_2\|_{X^0} \|u_3\|_{X^0}.
\end{align*}

Now we consider the off-diagonal sum, that is, the sum over $\tilde{k}_1\geq k+10.$ We will only consider the sum over $k_1=\tilde{k_1}\geq k+10,$ since the remaining sum can be split into finitely many similar cases and be similarly treated.
Applying H\"{o}lder's inequality, Corollary \ref{bilinearestimate3V2} once and Corollary \ref{linearestimateV2} twice, we obtain when $k+10\leq  k_1\leq M$
\begin{align*}
  & \left|\int_{0}^{T}\int_{\R^2} f_{k_1; k-10 \leq k_2,k_3\leq M;k}dxdt \right|  \\
  & \qquad \qquad \qquad \qquad = \left|\int_{0}^{T} \int_{\R^2}P_{k_1}u_1 \overline{P_{k-10 \leq k_2\leq M}u_2} P_{k-10 \leq k_3\leq M}u_3 \overline{P_kA(D) v}dxdt\right|  \\
  & \qquad \qquad \qquad \qquad \lesssim \|P_{k_1}u_1 \overline{P_kA(D)v}\|_{L^2_{t,x}} \|P_{k-10 \leq k_2\leq M}u_2\|_{L^4_{t,x}}\|P_{k-10 \leq k_3\leq M}u_3\|_{L^4_{t,x}}   \\
  & \qquad \qquad \qquad \qquad \lesssim 2^{\beta k/2} 2^{-\beta k_1/4} 2^{-2\beta k/8} \|P_{k_1}u_1\|_{V^2_{h(D)}} \|P_kv\|_{V^2_{h(D)}} \|u_2\|_{V^2_{h(D)}} \|u_3\|_{V^2_{h(D)}}  \\
  & \qquad \qquad \qquad \qquad \lesssim 2^{(k-k_1)\beta/4} \|P_{k_1}u_1\|_{U^2_{h(D)}} \|P_kv\|_{V^2_{h(D)}} \|u_2\|_{X^0} \|u_3\|_{X^0}.
\end{align*}
Here we have also used the embedding (\ref{embeddingX0Y0}). Therefore by the Cauchy-Schwartz inequality we conclude that
\begin{align*}
  & \sum_{k\leq M} \sum_{k+10\leq k_1\leq M} \left|\int_{0}^{T}\int_{\R^2} f_{k_1; k-10 \leq k_2,k_3\leq M;k}dxdt \right| \\
  & \qquad \qquad \qquad \qquad \qquad \lesssim \|u_2\|_{X^0} \|u_3\|_{X^0}
  \left(\sum_{k\leq M} \sum_{k+10\leq k_1\leq M} 2^{\beta(k-k_1)/4} \|P_{k_1}u_1\|^2_{U^2_{h(D)}} \right)^{1/2}  \\
  & \qquad \qquad \qquad \qquad  \qquad \qquad \times \left(\sum_{k\leq M} \sum_{k+10\leq k_1\leq M} 2^{\beta(k-k_1)/4}  \|P_{k}v\|^2_{V^2_{h(D)}} \right)^{1/2}  \\
   & \qquad \qquad  \qquad \qquad \qquad \lesssim \|u_1\|_{X^0} \|u_2\|_{X^0} \|u_3\|_{X^0} \|v\|_{Y^0} \\
   & \qquad \qquad  \qquad \qquad \qquad \lesssim \|u_1\|_{X^0} \|u_2\|_{X^0} \|u_3\|_{X^0}.
\end{align*}

\medbreak

\noindent
{\bf Estimate of $I_2.$}
We will only consider the sum over $k_3=\tilde{k_3}< k-10.$
Applying H\"{o}lder's inequality, Corollary \ref{linearestimateV2} twice and Corollary \ref{bilinearestimate3V2} once, we obtain when $k_3<  k-10,$
\begin{align*}
  & \left|\int_{0}^{T}\int_{\R^2} f_{k-10 \leq k_1, k_2\leq M; k_3; k}dxdt \right| \\
  & \qquad \qquad \qquad \qquad \qquad  \lesssim \|P_{k_3}u_3 P_{k}A(D)v \|_{L^2_{t,x}}\|P_{k-10\leq k_1\leq M}u_1\|_{L^4_{t,x}} \|P_{k-10\leq \cdot \leq M}u_2\|_{L^4_{t,x}}  \\
  & \qquad \qquad \qquad \qquad \qquad  \lesssim 2^{-\beta k_3/4}2^{-2\beta k/8} 2^{\beta k/2}  \|u_1\|_{X^0} \|u_2\|_{X^0} \|P_{k_3}u_3\|_{V^2_{h(D)}} \|P_kv\|_{V^2_{h(D)}} \\
  & \qquad \qquad \qquad \qquad \qquad  \lesssim 2^{\beta(k-k_3)/4}  \|P_{k_3}u_3\|_{U^2_{h(D)}} \|P_kv\|_{V^2_{h(D)}} \|u_1\|_{X^0} \|u_2\|_{X^0},
\end{align*}
Here  we have again used the embedding (\ref{embeddingX0Y0}). Now we sum the above estimate over $k\leq M, k_3< k-10$ using the Cauchy-Schwartz inequality as before to conclude that
$$I_2\lesssim \|u_1\|_{X^0}\|u_2\|_{X^0}\|u_3\|_{X^0}.$$

\medbreak
\noindent
{\bf Estimate of $I_3.$}
We will only consider the sum over $k_2=\tilde{k_2}< k-10,$ $ k_3=\tilde{k_3}< k-10.$
Applying H\"{o}lder's inequality and Corollary \ref{linearestimateV2}, \ref{bilinearestimate3V2}, we obtain when $k-10\leq k_1 \leq M,$
\begin{align*}
  & \left|\int_{0}^{T}\int_{\R^2} f_{k_1; k_2< k-10; k_3< k-10; k}dxdt \right| \\
  & \qquad \lesssim \left(\|P_{k_1}u_1 \overline{P_{<k-20}u_2}\|_{L^2_{t,x}}+\|P_{k_1}u_1 \overline{P_{k-20\leq \cdot <k-10}u_2}\|_{L^2_{t,x}}\right) \|P_{<k-10}u_3\overline{P_{k}A(D)v}\|_{L^2_{t,x}}  \\
  & \qquad \lesssim \left(2^{-\beta(k+k_1)/4}+2^{-\beta(k_1+3k)/8}\right)2^{\beta k/2}   \|u_2\|_{X^0} \|u_3\|_{X^0} \|P_{k_1}u_1\|_{V^2_{h(D)}} \|P_kv\|_{V^2_{h(D)}} \\
  & \qquad \lesssim 2^{\beta(k-k_1)/8}  \|P_{k_1}u_1\|_{U^2_{h(D)}} \|P_kv\|_{V^2_{h(D)}} \|u_2\|_{X^0} \|u_3\|_{X^0}.
\end{align*}
We sum the above estimate over $k-10\leq k_1\leq M,$ and use the Cauchy-Schwartz inequality as before to conclude
$$\sum_{k\leq M} \sum_{k-10\leq k_1\leq M}\left|\int_{0}^{T}\int_{\R^2} f_{k_1; k_2<k-10; k_3< k-10; k}dxdt \right| \leq \|u_1\|_{X^0}\|u_2\|_{X^0}\|u_3\|_{X^0}.$$
Hence we obtain
$$I_3\lesssim \|u_1\|_{X^0}\|u_2\|_{X^0}\|u_3\|_{X^0}.$$

\medbreak
\noindent
{\bf Estimate of $I_4.$}
Without loss of generality we only consider the sum over $k_1\leq k_2\leq k_3<k-10.$
We let $N>0$ be an admissible integer which will be chosen later. We let $T_i$ be the Fourier multiplier with symbol $1_{2\pi i/N\leq \xi/|\xi|<2\pi(i+1)/N}(\xi)$ where we again use the identification $\mathbb{S}^{1}\cong \R/2\pi \Z.$
Then
$$\int_{0}^{T}\int_{\R^2} f_{k_1,k_2,k_3, k}dxdt=\sum_{i_1,i_2,i_3,i=0}^{N-1} \int_{0}^{T}\int_{\R^2} (T_{i_1}P_{k_1}u_1) (\overline{T_{i_2}P_{k_2}}) (T_{i_3}P_{k_3}u_3)(\overline{T_{i}P_{k}A(D)v})dxdt.$$

We let $O_{i,k}$ be the open set
$$O_{i,k}=\{\xi \in \R^2: 2\pi (i-1/3)/N< \xi/|\xi|<2\pi(i+4/3)/N, \quad 2^{k-2}<\left||\xi|-1\right| <2^{k+2}\}$$
By Plancherel's theorem,  in order for the integral
$$\int_{0}^{T}\int_{\R^2} (T_{i_1}P_{k_1}u_1) (\overline{T_{i_2}P_{k_2}u_2}) (T_{i_3}P_{k_3}u_3)(\overline{T_{i}P_{k}A(D)v})dxdt$$
not to vanish,
we must have
\begin{equation}\label{e1}
  \{\xi_1+\xi_3:\xi_1\in O_{i_1,k_1}, \xi_3\in O_{i_3,k_3} \}  \bigcap
  \{\xi_2+\xi: \xi_2\in O_{i_2,k_2}, \xi\in O_{i,k} \}\neq \emptyset.
\end{equation}
When $k_1,k_2,k_3<k-10,$ (\ref{e1}) implies that there exists an admissible integer $N>0$ satisfying $N\sim 2^{k/2}$ and an admissible constant $d>0$ such that
\begin{multline}\label{e2}
  \int_{0}^{T}\int_{\R^2} f_{k_1,k_2,k_3, k}dxdt=\sum_{i_2,i=0}^{N-1} \left( \sum_{|i_1-i_2|\leq d} \sum_{|i_3-i|\leq d} +\sum_{|i_1-i|\leq d} \sum_{|i_3-i_2|\leq d} \right) \\
   \int_{0}^{T}\int_{\R^2} (T_{i_1}P_{k_1}u_1) (\overline{T_{i_2}P_{k_2}u_2}) (T_{i_3}P_{k_3}u_3)(\overline{T_{i}P_{k}A(D)v})dxdt.
\end{multline}
In particular (\ref{e1}) implies that if we let $\theta_1, \theta_2$ be the angles between $\xi_1,\xi_3$ and $\xi_2,\xi$ respectively, then
$$2(\cos \theta_1-\cos \theta_2)|\xi_1||\xi_3|=|\xi_1|+|\xi_3|-|\xi_2|-|\xi|-2\cos \theta_2 (|\xi_1||\xi_3|-|\xi_2||\xi|).$$
The above constraint implies that if we choose $N$ large enough (still satisfying $N\sim 2^{k/2}$) then
$$\int_{0}^{T}\int_{\R^2} (T_{i_1}P_{k_1}u_1) (\overline{T_{i_2}P_{k_2}u_2}) (T_{i_3}P_{k_3}u_3)(\overline{T_{i}P_{k}A(D)v})dxdt=0$$
if $|i_1-i_3|\leq 2,$ $N/2-2\leq |i_1-i_2|\leq N/2+2,$ $N/2-2\leq |i_3-i_2|\leq N/2+2$ all hold.

We let $l,\theta$ be the parameters in Proposition \ref{bilinearestimateoriginal} for $O_{i',k'},O_{i'',k''}$ with $k'\leq k'',$ $k'<k-10$ and $2<|i'-i''|\leq d.$ Then the same argument in the proof of Proposition \ref{bilinearestimatefreewave} combined with Lemma \ref{geometriclemma} shows that
$$l\lesssim 2^{k'}+2^{(k'-k'')/2}\lesssim 2^{(k'-k'')/2},\quad \theta \gtrsim 2^{k/2}.$$
So Proposition (\ref{bilinearestimateoriginal}) implies that
$$\|(e^{ith(D)}T_{i'}P_{k'}u)(e^{ith(D)}T_{i''}P_{k''}v)\|_{L^2_{t,x}}\lesssim 2^{(k'-k'')/4} 2^{-k/4} \|T_{i'}P_{k'}u\|_{L^2_x} \|T_{i''}P_{k''}v\|_{L^2_x}$$
when $2<|i'-i''|\leq d.$
Similarly we have
$$\|(e^{ith(D)}T_{i'}P_{k'}u)(\overline{e^{ith(D)}T_{i''}P_{k''}v})\|_{L^2_{t,x}}
\lesssim 2^{(k'-k'')/4} 2^{-k/4} \|T_{i'}P_{k'}u\|_{L^2_x} \|T_{i''}P_{k''}v\|_{L^2_x}.$$
when $-N/2+2<|i'-i''|<N/2-2.$
In particular $2^{(k'-k'')/4}2^{-k/4}\leq 2^{(k'-k)/4} 2^{-\beta k/4}.$
Therefore arguing as in the proof of (\ref{bilinearestimatefreewave}) and the argument deriving Corollary \ref{bilinearestimate3V2}, we conclude from the above analysis that when $k_1\leq k_2\leq k_3<k-10,$
\begin{align*}
  & \left|\int_{0}^{T}\int_{\R^2} (P_{k_1}u_1) (\overline{P_{k_2}u_2}) (P_{k_3}u_3)(\overline{P_{k}A(D)v})dxdt\right| \\ & \qquad \qquad \qquad \qquad
  \lesssim  \left( 2^{(k_1-k)/8} 2^{(k_2-k_3)/8} + 2^{(k_2-k)/8} 2^{(k_1-k_3)/8}+2^{(k_3-k)/8} 2^{(k_1-k_2)/8} \right)  \\ &\quad \qquad \qquad \qquad \qquad
  \times 2^{-\beta k/2} 2^{\beta k/2} \|P_{k_1}u_1\|_{V^2_{h(D)}} \|P_{k_2}u_2\|_{V^2_{h(D)}}\|P_{k_3}u_3\|_{V^2_{h(D)}}
   \|P_{k}v\|_{V^2_{h(D)}}.
\end{align*}
Summing the above estimate over $k_1\leq k_2\leq k_3<k-10$ and using the Cauchy-Schwartz inequality yield
$$I_4\lesssim \|u_1\|_{X^0}\|u_2\|_{X^0}\|u_3\|_{X^0}.$$

\section{Proof of the Main Theorem}\label{proofmainsection}

We recall that  $I_T$ is the linear operator given by
\begin{equation*}
  I_T(f)(t,x)=\int_{0}^{t}1_{[0,T)}(s)e^{i(t-s)h(D)}f(s)ds
\end{equation*}
where $T\in [0,\infty].$
By Duhamel's formula we know that $u\in C^0([0,T),L^2_x)$ is a strong solution to the Cauchy problem (\ref{modeleqn}) if and only if it satisfies the following integral equation
$$u(t)=e^{ith(D)} u_0 +i\int_{0}^{t}e^{i(t-s)h(D)}A(D)\left(|P_{\leq M}u(s)|^2 P_{\leq M}u(s)\right)ds$$
on $[0,T),$ which is implied by
\begin{equation}\label{duhammel}
  u(t)=1_{[0,T)}(t)e^{ith(D)}u_0+1_{[0,T)}(t)I_{T}\left(A(D)\left(|P_{\leq M}u|^2 P_{\leq M}u\right)\right).
\end{equation}
The identity (\ref{duhammel}) also implies that $u(t)=0$ outside $[0,T).$

We let $N_T$ be the (nonlinear) operator given by
\begin{equation}\label{Noperator}
  N_{T}(u)=1_{[0,T)}(t)I_{T}\left(A(D)\left(|P_{\leq M}u|^2 P_{\leq M}u\right)\right).
\end{equation}
We will use a fixed point argument to the map\footnote{Note that the map here depends on $u_0,$ although not indicated explicitly in the notation.}
$$\phi_T: u\mapsto 1_{[0,T)}(t)e^{ith(D)}u_0 +N_{T}(u)$$
to obtain a strong solution $u\in C^0_t([0,T),L^2_x).$ Such fixed point argument of establishing wellposedness results has been a standard method (see for example \cite{Tsutsumi} \cite{tao2006nonlinear}).

\begin{proof}[Proof of  Theorem \ref{smallglobalwellposed}]
   Let $\e_0 ,\delta_0>0$ be small constants to be chosen later, and let  $T=\infty.$ We let $B_{L^2_x}(0,\e_0)$ be the closed ball of radius $\e_0$ in the space $L^2_x(\R^2),$ and similarly we let $B_{X^0}(0,\delta_0)$ be the closed ball of radius $\delta_0$ in $X^0.$ Because of the embedding $X^0\hookrightarrow L^\infty_tL^2_x,$ $B_{X^0}(0,\delta_0)\cap C^0([0,\infty),L^2_x)$ is a closed subspace of $X^0,$ and we put $X^0-$norm on it which turns it into a Banach space.\footnote{By $B_{X^0}(0,\delta_0)\cap C^0([0,\infty),L^2_x)$ we mean the space of functions $u\in B_{X^0}(0,\delta_0)$ such that $u$ is continuous on $[0,\infty)$ as a function into $L^2_x.$}

   Since the nonlinearity is of algebraic power type, we can show using $\|1_{[0,\infty)}\cdot\|_{X^0}\lesssim \|\cdot\|_{X^0},$ Proposition \ref{nonlinearestimate} and the triangle inequality that
   $$\|N_\infty (u)-N_\infty (v)\|_{X^0}\leq c_1 (\|u\|_{X^0}^2+\|v\|_{X^0}^2)\|u-v\|_{X^0},$$
   for some admissible constant $c_1>0.$
   We also have
   \begin{equation}\label{7}
     \|\phi_\infty(u)\|_{X^0}\leq c_2\left( \|u_0\|_{L^2_x}+\|u\|_{X^0}^3 \right)
   \end{equation}
   for some admissible constant $c_2>0,$
   since $\|1_{[0,\infty)}(t)e^{ith(D)}u_0\|_{X^0}\lesssim \|1_{[0,\infty)} u_0\|_{U^2}\lesssim \|u_0\|_{L^2_x}.$
   Therefore we can choose $\delta_0, \e_0>0$ small enough depending only on $c_1,c_2$ such that when $u_0\in B_{L^2_x}(0,\e_0),$
   $$\phi_\infty: B_{X^0}(0,\delta_0)\cap C^0([0,\infty),L^2_x) \rightarrow B_{X^0}(0,\delta_0)\cap C^0([0,\infty),L^2_x)$$
   and we have a strict contraction
   $$\|\phi_\infty (u)-\phi_\infty (v)\|_{X^0}\leq \frac{1}{2}\|u-v\|_{X^0}.$$
   Therefore for every $u_0\in B_{L^2_x}(0,\e_0),$ there exists a unique fixed point $u\in B_{X^0}(0,\delta_0)\cap C^0([0,\infty),L^2_x)$ of $\phi_\infty,$ which by Duhamel's formula is a strong global solution to the Cauchy problem (\ref{modeleqn}).

   For fixed points $u,v$ of $\phi_\infty$ with initial data $u_0,v_0\in B_{L^2_x}(0,\e_0)$ respectively, we have
   $$\|u-v\|_{X^0}\leq c_2\|u_0-v_0\|_{L^2_x}+2c_1\delta_0^2 \|u-v\|_{X^0}.$$
   If we choose $\delta_0,\e_0$ sufficiently small then we can conclude the map
   $$B_{L^2_x}(0,\e_0)\rightarrow B_{X^0}(0,\delta_0)\cap C^0([0,\infty),L^2_x), \, u_0\mapsto u$$
   is Lipschitz continuous. Hence the solution map
   $$B_{L^2_x}(0,\e_0)\rightarrow B_{X^0([0,\infty))}(0,\delta_0)\cap C^0([0,\infty),L^2_x),\, u_0\mapsto u|_{[0,\infty)}$$
   is Lipschitz continuous. Scattering of solutions is an immediate consequence of the fact that the limit $\lim_{t\rightarrow \infty} g$ exists for any function $g\in U^2.$

   To establish unconditional uniqueness of solutions in $X^0([0,\infty))\cap C^0([0,\infty),L^2_x),$ we will show that if $u,v\in X^0([0,\infty))\cap C^0([0,\infty),L^2_x)$ are two strong solutions to our Cauchy problem (\ref{modeleqn}) with $u(0)=v(0),$ then $u=v$ on $[0,\infty).$ By the time-translation invariance of our Cauchy problem (\ref{modeleqn}) and a continuity argument, it suffices to show that if $u(0)=v(0),$ then the solutions $u,v$ agree on a short time interval $[0,T')$ for some $T'>0.$ However this is immediate from local wellposedness of our Cauchy problem (\ref{modeleqn}).
\end{proof}

%
%

\bibliographystyle{alpha}
\bibliography{Fu-Tataru}

\end{document}